\newtheorem{thm}{Theorem}[section]
\newtheorem{lem}[thm]{Lemma}
\newtheorem{corollary}[thm]{Corollary}
\theoremstyle{definition}
\newtheorem{rmk}[thm]{Remark}
\newcommand{\om}{\Omega}
\newcommand{\bom}{\partial\om}
\newcommand{\rn}{\mathbb{R}^n}
\renewcommand{\div}{\textup{div}\,}
\newcommand{\vp}{\varphi}
\newcommand{\tr}{\textup{Tr}\,}
\newcommand{\wh}{\widehat}
\numberwithin{equation}{section}
\renewcommand{\epsilon}{\varepsilon}
\title{Dirichlet problems for second order linear elliptic equations with $L^{1}$-data}
\author{Hyunseok Kim \thanks{%
Department of Mathematics, Sogang University, Seoul, 121-742, Korea (kimh@sogang.ac.kr).}
\and Jisu Oh \thanks{%
Department of Mathematics, Sogang University, Seoul, 121-742, Korea (jisuoh@sogang.ac.kr); Present address: Department of Statistics, North Carolina State University, Raleigh, North Carolina, USA (joh26@ncsu.edu).}
 \thanks{The authors were supported by Basic Science
Research Program through the National Research Foundation of Korea (NRF) funded by
the Ministry of Education (No. NRF-2016R1D1A1B02015245).}
\date{}
}
\begin{document}

\maketitle

\begin{abstract}
We consider the Dirichlet problems for  second order linear elliptic equations in non-divergence and divergence forms on a bounded  domain $\Omega$ in $\rn$, $n \ge 2$:
$$
-\sum_{i,j=1}^n a^{ij}D_{ij} u + b \cdot  D u + cu = f \;\;\text{ in $\om$} \quad  \text{and} \quad  u=0 \;\;\text{ on $\partial \om$}
$$
and
$$
-  {\rm div} \left( A D  u \right) +  \div(ub) + cu = \div F  \;\;\text{ in $\om$} \quad  \text{and} \quad  u=0 \;\;\text{ on $\partial \om$} ,
$$
where $A=[a^{ij}]$ is symmetric, uniformly elliptic, and of vanishing mean oscillation (VMO).
The main purposes of this paper is to study unique solvability for both problems with $L^1$-data.  We   prove that if $\om$ is of class $C^{1}$, $ \div A + b\in L^{n,1}(\om;\rn)$, $c\in L^{\frac{n}{2},1}(\om) \cap L^s(\om)$  for some $1<s<\frac{3}{2}$, and $c\ge0$ in $\Omega$, then for each $f\in L^1 (\Omega )$, there exists a unique weak solution in $W^{1,\frac{n}{n-1},\infty}_0 (\om)$ of the first problem.
Moreover, under the additional condition that $\om$ is of class $C^{1,1}$ and $c\in L^{n,1}(\om)$, we show that for each $F \in L^1 (\om ; \rn )$, the second problem has a unique very weak solution in $L^{\frac{n}{n-1},\infty}(\om)$.
\end{abstract}

\section{Introduction}

Let $\Omega$ be a bounded domain in $\mathbb{R}^n$, where   $n\ge2$ is the dimension.
We study the following Dirichlet problems for second order linear elliptic equations in non-divergence and divergence forms:
\begin{equation}\label{Eq1}
\left\{
\begin{aligned}
-\sum_{i,j=1}^n a^{ij}D_{ij} u + b \cdot  D u + cu  &= f \quad \text{in $\om$}, \\
u &= 0 \quad  \text{on $\partial \om$}
\end{aligned}
\right.
\end{equation}
and
\begin{equation}\label{Eq3}
\left\{
\begin{aligned}
-{\rm div} \left( A D u \right) + \div(ub) + cu   &= \div F \quad \text{in $\om$}, \\
u&= 0 \,\,\,\,\qquad \text{on $\partial \om$}.
\end{aligned}
\right.
\end{equation}
Here  $A=[a^{ij}]:\rn \rightarrow \mathbb{R}^{n^2}$, $b=(b^1,\dots,b^n):\om \rightarrow \rn$, and $c:\om \rightarrow \mathbb{R}$ are locally integrable  functions.
We also consider the dual problems of (\ref{Eq1}) and (\ref{Eq3}):
\begin{equation}\label{Eq4}
\left\{
\begin{aligned}
-\sum_{i,j=1}^n D_{ij} \left( a^{ij} v \right) - \div(vb) + cv  &= \div G \quad \text{in $\om$}, \\
v &= 0 \,\,\,\,\qquad  \text{on $\partial \om$}
\end{aligned}
\right.
\end{equation}
and
\begin{equation}\label{Eq2}
\left\{
\begin{aligned}
-\div (A Dv ) - b \cdot D  v + cv  &= \div G \quad \text{in $\om$}, \\
v &= 0 \,\,\,\,\qquad  \text{on $\partial \om$}.
\end{aligned}
\right.
\end{equation}

Throughout the paper, we assume that $A = [a^{ij}]$   satisfies the following ellipticity and regularity conditions:
\begin{enumerate}
\item[(A1)] $A(x)$ is a symmetric matrix for each $x\in \rn$.
\item[(A2)]  There exists a constant $0<\delta<1$ such that
$\delta |\xi|^2 \le A (x)\xi \cdot \xi \le \delta^{-1} |\xi|^2$ for all $x, \xi\in \rn$.
\item[(A3)]  The functions  $a^{ij}$ are   of vanishing mean oscillation (VMO) for all $1\le i, j \le n$.
\end{enumerate}

\noindent Then a complete $L^p$-theory has been   developed for both    problems  \eqref{Eq1} and \eqref{Eq3}  when the lower order coefficients $b^i$ and $c$ are sufficiently regular and the domain $\Omega$ is smooth. For instance, if $b=0$ and $c=0$ identically,  or more generally if  $b, c$ are bounded and $c$ is nonnegative, then unique solvability in  $W^{2,p}(\Omega )$ of \eqref{Eq1} and  in $W^{1,p}(\Omega )$ of \eqref{Eq3} are  proved in \cite{AQ,BS,CFL,DK0,DK,gilbarg,krylov0}   for any $1<p<\infty$.
Recently, an $L^p$-theory was extended to  more general  $b$ and $c$ in critical Lebesgue or weak Lebesgue spaces. Precisely, it was shown by Krylov \cite{krylov} that if $b \in L^n (\Omega ; \rn)$, $c \in L^{n}(\om )$, and $c \ge 0$, then the problem \eqref{Eq1} is uniquely solvable in $ W^{2,p}(\Omega )$ for any $1< p<n$; that is, for any $f \in L^p (\Omega )$ with $1<p<n$, there exists a unique strong solution $u$ in $W_0^{1,p}(\Omega ) \cap W^{2,p}(\Omega )$ of \eqref{Eq1}. The corresponding $W^{1,p}$-results for \eqref{Eq3} have been proved only  for $1<p<n$ too. For more details, see the papers   \cite{kimkang,kimkim,kimkwon,kim,kwon0,kwon}.

It has been well known that an $L^p$-theory fails to hold for the limiting case when   $p=1$; indeed, there exists  $f\in L^1 (\Omega)$ such that the Poisson equation
\begin{equation}\label{Poisson-eq}
-\Delta u =f \quad\mbox{in}\,\, \Omega
\end{equation}
has no solutions in $W_{loc}^{2,1}(\Omega )$. As a simple example, we   take an $L^1$-function
$$
f(x)=\frac{1}{|x|^n \left| \log |x| \right|^{1+\epsilon}}
$$
defined on   $  \Omega = B_1  $, the open unit ball centered at the origin, where $0< \epsilon \le \frac{n-1}{n}$. Then the Poisson equation (\ref{Poisson-eq}) has a radial solution $v =v(r)$ satisfying
$$
-v''(r) -\frac{n-1}{r}v'(r) = \frac{1}{r^{n}| \log r | ^{1+\epsilon}}
$$
for $0<r<1$. Since
$$
|v'(r)| = \frac{1}{r^{n-1}} \int_0^r \frac{1}{t| \log t | ^{1+\epsilon}} dt \sim \frac{1}{\epsilon r^{n-1}| \log r |^{\epsilon}} \quad\mbox{as}\,\,r \to 0,
$$
it follows that     $|Dv| \in L^{\frac{n}{n-1},\infty}(B_\rho ) \setminus L^{\frac{n}{n-1}}(B_\rho  )$ for any $\rho < 1$. Now, if   $u$ is any solution of  (\ref{Poisson-eq}), then   $ u -v$  is harmonic and so smooth in $\Omega$. Hence any solution $u$ of  (\ref{Poisson-eq}) should satisfy $|Du| \in L^{\frac{n}{n-1},\infty}(B_\rho   ) \setminus L^{\frac{n}{n-1}}(B_\rho )$ for any $\rho< 1$. This implies,  by the Gagliardo-Nirenberg-Sobolev inequality (see e.g. \cite{evans})), that there exist no solutions in $W_{loc}^{2,1}(\Omega )$ of (\ref{Poisson-eq}). To find  a right regularity class for solutions of (\ref{Poisson-eq}) with  $f\in L^1(\om)$, let $w =   \Gamma \ast f$ be the Newtonian potential of $f$ over $\Omega$, where  $\Gamma$ denotes  the fundamental solution of the Laplace equation (see \cite{gilbarg}).  Then since $D \Gamma \in L^{\frac{n}{n-1},\infty}(\Omega ;\rn )$ and $\Gamma \in L^{p}(\Omega )$ for any $ p<\frac{n}{n-2}$, it follows from Young's convolution inequality in Lorentz spaces $L^{p,q}(\Omega )$ (see \thref{conv} in Section 2) that $w$ belongs to the Sobolev-Lorentz space
$ W^{1,\frac{n}{n-1},\infty}(\Omega  )$. Hence, if $u$ is any solution of  (\ref{Poisson-eq}), then   $u \in  W_{loc}^{1,\frac{n}{n-1},\infty}(\Omega  )$, that is, $u \in  W^{1,\frac{n}{n-1},\infty}(\Omega_0  )$ for any open set  $\Omega_0$ with $\overline{\Omega_0} \subset \Omega$.
It turns out that if $\Omega$ is a bounded $C^1$-domain, then for each  $f\in L^1(\om)$ there exists a unique weak solution $u  \in  W_0^{1,\frac{n}{n-1},\infty}(\Omega  )$  of (\ref{Poisson-eq}). Here  $W_0^{1,p,q}(\Omega )$ is defined as  the space of all functions in $W^{1,p,q}(\Omega )$ having zero trace on $\partial\Omega$. See Section 2 for more details on the Sobolev-Lorentz spaces $W^{k,p,q}(\Omega )$ and $W_0^{1,p,q}(\Omega )$, where $k \in \mathbb{N}$, $1 \le p<\infty$, and $1 \le q \le \infty$.

\medskip

The first purpose of this paper is to establish existence and uniqueness of weak solutions in $  W_0^{1,\frac{n}{n-1},\infty}(\Omega  )$  of the general elliptic problem \eqref{Eq1} for any  data $f$ in $L^1 (\Omega )$, which of course requires some additional assumptions on the  coefficients $A$,  $b$, and $c$.
First of all, in addition to (A1)-(A3), we need to assume at least that
$$
\div A\in L_{loc}^{1}(\om ;\rn ), \quad\mbox{that is}, \quad \sum_{i=1}^n D_i a^{ij} \in L_{loc}^{1}(\Omega) \quad\mbox{for each}\,\, j=1,\ldots , n ,
$$
which allows us to write  the equation in \eqref{Eq1}   as a divergence-form equation
\begin{equation}\label{Eq1-div}
-\div (A Du ) + \tilde{b} \cdot D  u + cu  = f \quad\mbox{in}\,\, \Omega ,
\end{equation}
where $\tilde{b} = \div A + b$.
Then by   a weak solution  in $W_0^{1,\frac{n}{n-1},\infty}(\Omega  )$ of  \eqref{Eq1} with $f$ belonging to  $L^1 (\Omega )$ or more generally $f$ being a distribution on $\Omega$, we mean a function $u \in W_0^{1,\frac{n}{n-1},\infty}(\Omega  )$ such that
\[
\tilde{b}  \cdot D u , c u \in L_{loc}^1 (\Omega )
\]
and
$$
\int_\om \left(A D  u \cdot  D  \vp + \vp \tilde{b}  \cdot  D  u  + cu \vp\right) dx=   \langle f , \vp \rangle
$$
for all $\vp \in C_c^\infty (\om)$. Note here that $|Du| \in L^{\frac{n}{n-1},\infty}(\Omega  )  $ and $u \in L^{p,\infty}(\Omega )$ for any finite $p \le \frac{n}{n-2}$, due to the Sobolev embedding theorem.   Hence in view of H\"{o}lder's inequality or the duality theorem in Lorentz spaces (see Lemmas \ref{holder} and \ref{dual-lorentz}), we are forced to impose that
\begin{equation}\label{btilde-c}
\tilde{b} \in L^{n,1}(\Omega ; \rn)\quad\mbox{and}\quad c \in L^{\frac{n}{2},1}(\Omega )\cap L^s (\Omega )\quad\mbox{for some}\,\,1 < s< \frac{3}{2}.
\end{equation}

We shall prove a    unique solvability result  in $  W_0^{1,\frac{n}{n-1},\infty}(\Omega  )$  for  the Dirichlet  problem  \eqref{Eq1} with   $f \in L^1 (\Omega )$ under the  condition (\ref{btilde-c}) on $A$, $b$, and $c$.
In fact, this is an easy corollary of the following result  for weak solutions  of the problems \eqref{Eq3} and \eqref{Eq2} in divergence form.

\begin{thm}\thlabel{thm1}
Let $\om$ be a bounded $C^{1}$-domain in $\rn$, $n \ge 2$. Assume   that
$b\in L^{n,1}(\om;\rn)$,
   $c\in L^{\frac{n}{2},1}(\om) \cap L^s(\om)$ for some $1<s<\frac{3}{2}$,   and $c\ge0$ in $\om$.
\begin{enumerate}[{\upshape (i)}]
\item For each $F\in L^{n,1}(\om;\rn)$, there exists a unique weak solution $u$ in $W^{1,n,1}_0(\om)$ of \eqref{Eq3}. Moreover,
$$
\|u\|_{W^{1,n,1}(\om)} \le C \|F\|_{L^{n,1}(\om;\rn)},
$$
where $C=C(n,\om,A,b,c)$.

\item For each $G\in L^{\frac{n}{n-1},\infty}(\om;\rn)$, there exists a unique weak solution $v$ in $W^{1,\frac{n}{n-1},\infty}_0(\om)$ of \eqref{Eq2}. Moreover,
$$
\|v\|_{W^{1,\frac{n}{n-1},\infty}(\om)} \le C \|G\|_{L^{\frac{n}{n-1},\infty}(\om;\rn)},
$$
where $C=C(n,\om,A,b,c)$.
\end{enumerate}
\end{thm}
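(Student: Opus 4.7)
The plan is to reduce \thref{thm1} to a sub-critical $L^p$-solvability available in the literature, extend the exponent range via a duality trick, and finally reach the Lorentz-space endpoints by real interpolation. Parts (i) and (ii) will fall out together, with each playing the role of the dual problem in the argument for the other.

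Under the stated hypotheses, unique $W^{1,p}_0$-solvability of both \eqref{Eq3} and \eqref{Eq2} for every $1<p<n$ follows from the $L^p$-theory developed in \cite{kimkang,kimkim,kimkwon}, since $b\in L^{n,1}\hookrightarrow L^n$ and $c\in L^{n/2,1}\hookrightarrow L^{n/2}$; only minor adaptations are needed to accommodate the Lorentz integrability of the coefficients, and the auxiliary assumption $c\in L^s$ with $1<s<3/2$ ensures that the weak formulations are unambiguously meaningful. Next, because the operators of \eqref{Eq3} and \eqref{Eq2} are formal adjoints, the corresponding solution maps $\mathcal{T}_L$ and $\mathcal{T}_{L^*}$ satisfy
$$
\int_\om F\cdot D(\mathcal{T}_{L^*}G)\,dx=\int_\om D(\mathcal{T}_L F)\cdot G\,dx
$$
for smooth $F,G$. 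Combined with the $L^p$-solvability of $L^*$ for $1<p<n$, this identity extends $\mathcal{T}_L:L^{p'}\to W_0^{1,p'}$ to the range $n/(n-1)<p'<\infty$; together with the direct range one obtains $\mathcal{T}_L:L^p\to W_0^{1,p}$ for every $1<p<\infty$, and analogously for $\mathcal{T}_{L^*}$.

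To reach the endpoints, fix $p_0,p_1$ with $1<p_0<n<p_1<\infty$ and $\theta\in(0,1)$ satisfying $1/n=(1-\theta)/p_0+\theta/p_1$. The real interpolation identities
$$
(L^{p_0},L^{p_1})_{\theta,1}=L^{n,1},\qquad(L^{p_0},L^{p_1})_{\theta,\infty}=L^{n/(n-1),\infty},
$$
together with their counterparts for the Sobolev-Lorentz spaces $W_0^{1,p,q}(\om)$ on a bounded $C^1$-domain (obtained via an extension operator, as recorded in Section 2), promote the above $L^p$-bounds to $\mathcal{T}_L:L^{n,1}\to W_0^{1,n,1}$ and $\mathcal{T}_{L^*}:L^{n/(n-1),\infty}\to W_0^{1,n/(n-1),\infty}$, yielding the existence and estimates in (i) and (ii). Uniqueness reduces to the uniqueness already known in $W_0^{1,p}$ for some $1<p<n$, the condition $c\ge0$ entering through an energy argument that exploits the Sobolev embedding $W_0^{1,n,1}\hookrightarrow L^\infty$.

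The main obstacle is the duality extension step: one has to verify carefully that the pairing identity between $\mathcal{T}_L$ and $\mathcal{T}_{L^*}$ remains legitimate at the level of weak solutions when the coefficients only have critical Lorentz regularity, and then propagate the operator bound from smooth data to the full $L^{p'}$-class by density. A secondary technical point is the rigorous interpolation of the Sobolev-Lorentz spaces $W_0^{1,p,q}(\om)$ on a $C^1$-domain, which requires an extension-retraction argument in addition to the standard Lorentz interpolation machinery.
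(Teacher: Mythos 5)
Your plan has a logical gap at the very first reduction, and the later interpolation step depends on it, so the whole argument collapses.

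You assert that the $L^p$-theory in \cite{kimkang,kimkim,kimkwon} gives unique $W^{1,p}_0$-solvability of \emph{both} \eqref{Eq3} and \eqref{Eq2} for every $1<p<n$. That is not what is available: as the paper's own remark records, the known results give \eqref{Eq2} solvable in $W^{1,p}_0(\om)$ only for $\frac{n}{n-1}<p<\infty$ (and, dually, \eqref{Eq3} for $1<p<n$). The missing sub-range $1<p\le \frac{n}{n-1}$ for \eqref{Eq2} is not a matter of bibliographic detail: if $v\in W^{1,p}_0(\om)$ with $p<\frac{n}{n-1}$ and $b\in L^{n,1}(\om;\rn)\subset L^n(\om;\rn)$, then by H\"older $b\cdot Dv$ lies only in $L^q$ with $\frac1q=\frac1n+\frac1p>1$, so $q<1$ and the term $\int_\om \vp\, b\cdot Dv\,dx$ in the weak formulation of \eqref{Eq2} need not even be defined. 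There is therefore no $W^{1,p}_0$ solution theory for \eqref{Eq2} below $\frac{n}{n-1}$.

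With the premise corrected, your duality step produces nothing new: dualizing $\mathcal{T}_{L^*}:L^p\to W^{1,p}_0$ for $\frac{n}{n-1}<p<\infty$ simply reproduces $\mathcal{T}_L:L^{p'}\to W^{1,p'}_0$ for $1<p'<n$, which is the range you already had. You never obtain boundedness of $\mathcal{T}_L$ on $L^{p_1}$ for any $p_1>n$ (nor of $\mathcal{T}_{L^*}$ on $L^{p_0}$ for any $p_0<\frac{n}{n-1}$). But the interpolation identities you invoke, $(L^{p_0},L^{p_1})_{\theta,1}=L^{n,1}$ and $(L^{p_0},L^{p_1})_{\theta,\infty}=L^{\frac{n}{n-1},\infty}$, require exponents straddling the target, i.e.\ $p_0<n<p_1$ for part (i) and $p_0<\frac{n}{n-1}<p_1$ for part (ii). Real interpolation cannot reach an endpoint of the range on which the operator is bounded; it only fills in the interior. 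The point of \thref{thm1}, and the reason the stronger Lorentz hypotheses $b\in L^{n,1}$, $c\in L^{\frac n2,1}\cap L^s$ are imposed, is precisely that the endpoint $p=n$, $q=1$ (and dually $p=\frac{n}{n-1}$, $q=\infty$) is not an interpolation consequence of the sub-critical theory.

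The paper proceeds quite differently. Part (i) is proved \emph{directly at the endpoint} by the method of continuity: $L_0=-\div(AD\cdot)$ is an isomorphism from $W^{1,n,1}_0(\om)$ onto $W^{-1,n,1}(\om)$ (\thref{kimtsai2}), the lower-order terms are absorbed via the Gerhardt-type estimate in $L^{n,1}$ and $W^{-1,n,1}$ norms (\thref{estimate2}), and the residual $\|u\|_{1;\om}$ term is controlled by the logarithmic decay of the distribution function (\thref{apriori est-2-1}), which is where $c\ge 0$ enters. Part (ii) is then deduced from Part (i) by a duality pairing argument. If you want to salvage an interpolation-flavoured proof you would still have to establish, by a separate direct argument, boundedness of the solution operator strictly above the critical exponent; the a priori estimate \eqref{aprori-cont} in the paper is exactly the ingredient your proposal is missing.
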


Given  $f\in L^1(\om)$, define $F=D\Gamma * f$. Then
\[
\div F  = \Delta (\Gamma * f  ) = f  \quad\mbox{in}\,\, \om.
\]
Moreover, by Young's convolution inequality (\thref{conv}),
$$
\|F\|_{L^{\frac{n}{n-1},\infty}(\om; \rn)} \le C(n) \|f \|_{L^1 (\Omega )}.
$$
Therefore, as an immediate consequence of Theorem \ref{thm1} (ii), we obtain the following  result for the problem \eqref{Eq1} with $L^1$-data.

\begin{corollary}\thlabel{thm2}
Let $\om$ be a bounded $C^{1}$-domain in $\rn$, $n \ge 2$.
Assume that   $\div A  + b\in L^{n,1}(\om;\rn)$,
   $c\in L^{\frac{n}{2},1}(\om) \cap L^s(\om)$ for some $1<s<\frac{3}{2}$,   and $c\ge0$ in $\om$.
Then for each $f\in L^1(\om)$, there exists a unique weak solution $u$ in $W^{1,\frac{n}{n-1},\infty}_0(\om)$ of \eqref{Eq1}. Moreover,
$$
\|u\|_{W^{1,\frac{n}{n-1},\infty}(\om)} \le C \|f\|_{L^1(\om)},
$$
where $C=C(n,\om,A,b,c)$.
\end{corollary}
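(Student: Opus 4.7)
The plan is to reduce \thref{thm2} to \thref{thm1}(ii) by solving the auxiliary equation $\div F = f$ in the distributional sense and recognizing that \eqref{Eq1}, once rewritten in divergence form via $\tilde b := \div A + b$, is precisely an instance of \eqref{Eq2}.

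First, given $f \in L^1(\om)$, I would extend $f$ by zero to $\rn$ and set $F := D\Gamma \ast f$, where $\Gamma$ is the fundamental solution of the Laplacian. Then $\div F = \Delta (\Gamma \ast f) = f$ in $\mathcal{D}'(\om)$. Since $D\Gamma \in L^{\frac{n}{n-1},\infty}(\rn;\rn)$ and $f \in L^1(\rn)$, Young's convolution inequality in Lorentz spaces (\thref{conv}) gives
$$
\|F\|_{L^{\frac{n}{n-1},\infty}(\om;\rn)} \le C(n)\, \|f\|_{L^1(\om)}.
$$

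Next, I would rewrite the non-divergence equation in \eqref{Eq1} as the divergence equation \eqref{Eq1-div},
$$
-\div(A D u) + \tilde b \cdot D u + c u = f \quad \text{in } \om,
$$
with $\tilde b = \div A + b \in L^{n,1}(\om;\rn)$. Comparing with \eqref{Eq2}, this is exactly the problem solved by \thref{thm1}(ii) with coefficient field $-\tilde b$ in place of $b$ (the $L^{n,1}$-norm being insensitive to sign) and with datum $\div F$ in place of $\div G$. The hypotheses on $A$, $\tilde b$, $c$, and on $\om$ required by \thref{thm1}(ii) are inherited from those of \thref{thm2}. Applying \thref{thm1}(ii) with $G = F$ produces a unique $u \in W_0^{1,\frac{n}{n-1},\infty}(\om)$ satisfying \eqref{Eq1-div} weakly, with
$$
\|u\|_{W^{1,\frac{n}{n-1},\infty}(\om)} \le C\, \|F\|_{L^{\frac{n}{n-1},\infty}(\om;\rn)} \le C\, \|f\|_{L^1(\om)}.
$$

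The only point requiring any care is the equivalence between weak solutions of \eqref{Eq1} and of \eqref{Eq1-div}. By the definition recalled before \eqref{btilde-c}, a weak solution $u \in W_0^{1,\frac{n}{n-1},\infty}(\om)$ of \eqref{Eq1} is required to satisfy $\tilde b \cdot D u,\, cu \in L^1_{\rm loc}(\om)$ and to be tested against $\vp \in C_c^\infty(\om)$ in precisely the integral identity that also defines weak solutions of \eqref{Eq1-div} with right-hand side $\div F$. Hence the two notions coincide, and uniqueness is immediate from the uniqueness part of \thref{thm1}(ii) applied with $G = 0$. I do not anticipate a substantive obstacle here, since the corollary is essentially bookkeeping on top of \thref{thm1}(ii); the delicate analytic content lies entirely in \thref{thm1}.
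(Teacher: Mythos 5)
Your proposal is correct and follows exactly the same reduction as the paper: set $F = D\Gamma * f$, use Young's convolution inequality in Lorentz spaces (\thref{conv}) to get $\|F\|_{\frac{n}{n-1},\infty;\om} \le C\|f\|_{1;\om}$, rewrite \eqref{Eq1} in divergence form via $\tilde b = \div A + b$, and invoke \thref{thm1}(ii). The extra bookkeeping you supply (zero extension of $f$, the sign change $b \mapsto -\tilde b$ to match \eqref{Eq2}, and the coincidence of the two weak-solution notions) is accurate and only makes explicit what the paper leaves implicit.
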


\begin{rmk}
It has been already shown in \cite{kimkang,kimkim,kimkwon,kwon} that if the lower order coefficients  $b$ and $c$ satisfy   $b \in L^n (\Omega;\rn )$,  $c\in L^{\frac{n}{2}}(\om) \cap L^s(\om)$ for some $1<s<\frac{3}{2}$, and $c \ge 0$ in $\Omega$, then the problem \eqref{Eq2} is uniquely solvable in  $W_0^{1,p}(\Omega )$ for  $\frac{n}{n-1}<p<\infty$, under the additional assumption  that $b \in L^r (\Omega;\mathbb{R}^2  )$ for some $r>2$ or  ${\rm div}\, A \in L^2 (\Omega ; \mathbb{R}^2 )$ if $n=2$. By real interpolation (see Theorem \ref{interpolation0}), the problem  \eqref{Eq2} is also solvable uniquely in  $W_0^{1,p,q}(\Omega )$ for $\frac{n}{n-1}<p<\infty$ and $1 \le q \le \infty$.
The novelty of Theorem \ref{thm1} is that the problem  \eqref{Eq2} is   solvable uniquely in  $W_0^{1,p,q}(\Omega )$ and  its dual problem  \eqref{Eq3} is   solvable uniquely in  $W_0^{1,p',q'}(\Omega )$ for one limiting case when  $p = \frac{n}{n-1}$ and $q = \infty$ if $b$ and $c$ satisfy the hypotheses of the theorem.
\end{rmk}

The second purpose of the paper is to study the problem (\ref{Eq3}) in divergence form for any $F$ in $L^1 (\Omega;\rn )$. To find a right class for solutions, we consider  the Poisson equation (\ref{Poisson-eq}) with $f =\div F$ for some $F\in L^1 (\Omega ; \rn )$. Then a solution of (\ref{Poisson-eq}) is given by
\[
w(x) = - (\Gamma * f)(x) = -\int_\om D\Gamma(x-y) \cdot F(y)\,dy.
\]
It follows from Young's convolution inequality  that $w \in L^{\frac{n}{n-1},\infty}(\om)$. Therefore, to solve the problem (\ref{Eq3}) for general $F\in L^1 (\Omega ; \rn )$, we need to introduce the notion of  very weak solutions in $  L^{\frac{n}{n-1},\infty}(\om)$, which is possible only for the adjoint or dual equation of a non-divergence form equation in \eqref{Eq1}. By a very weak solution  in $L^{\frac{n}{n-1},\infty}(\Omega  )$ of  \eqref{Eq4} with $G \in L^1 (\om;\rn )$, we mean a function $v \in L^{\frac{n}{n-1},\infty}(\Omega  )$ such that
\[
v |b| , c v \in L_{loc}^1 (\Omega )
\]
and
$$
\int_\om v \left( -\sum_{i,j=1}^n a^{ij} D_{ij}\psi +    b \cdot D \psi + c \psi \right) dx=   -\int_\om G \cdot D \psi \, dx
$$
for all $\psi\in C^{1,1}(\overline{\om})$ with $\psi =0$ on $\partial\Omega$.
Very weak solutions  in $L^{\frac{n}{n-1},\infty}(\Omega  )$ of  \eqref{Eq3} can be also defined if $A$ satisfies the additional condition $\div A  \in L_{loc}^{1}(\om;\rn)$.

We are   ready to state our result for very weak solutions  in $L^{\frac{n}{n-1},\infty}(\Omega  )$ of  \eqref{Eq3} with $F\in L^1 (\Omega ; \rn )$, which is an immediate consequence of the following result for the problem \eqref{Eq1} and its dual problem (\ref{Eq4}).

\begin{thm}\thlabel{thm1-2}
Let $\om$ be a bounded $C^{1,1}$-domain in $\rn$, $n \ge 2$. Assume   that
$b\in L^{n,1}(\om;\rn)$,
   $c\in L^{n,1}(\om)  $,   and $c\ge0$ in $\om$.
\begin{enumerate}[{\upshape (i)}]
\item For each $f\in L^{n,1}(\om)$, there exists a unique strong solution $u$ in $W^{2,n,1}(\om)\cap W^{1,n,1}_0(\om)$ of \eqref{Eq1}. Moreover,
$$
\|u\|_{W^{2,n,1}(\om)} \le C \|f\|_{L^{n,1}(\om)},
$$
where $C=C(n,\om,A ,b,\|c\|_{n,1;\om})$.

\item For each $G\in L^1(\om;\rn)$, there exists a unique very weak solution $v$ in $L^{\frac{n}{n-1},\infty}(\om)$ of $(\ref{Eq4})$.
Moreover,
$$
\|v\|_{L^{\frac{n}{n-1},\infty}(\om)} \le C \|G\|_{L^1(\om;\rn)},
$$
where $C=C(n,\om,A,b,\|c\|_{n,1;\om})$.
\end{enumerate}
\end{thm}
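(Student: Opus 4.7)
The plan is to prove part (i) as a Lorentz-endpoint refinement of Krylov's $W^{2,p}$-theory ($1 < p < n$), and then obtain part (ii) by a duality argument against part (i).

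For part (i), I would first establish the Calder\'on--Zygmund bound for the principal part $\mathcal{L}_0 u := -\sum_{i,j} a^{ij} D_{ij} u$: since $A$ is VMO and $\om$ is $C^{1,1}$, the classical estimate $\|u\|_{W^{2,p}} \le C_p \|\mathcal{L}_0 u\|_{L^p}$ on $W^{2,p}(\om) \cap W^{1,p}_0(\om)$ holds for every $1 < p < \infty$, so $\mathcal{L}_0^{-1}$ is a bounded isomorphism $L^p(\om) \to W^{2,p}(\om) \cap W^{1,p}_0(\om)$. Real interpolation between two exponents $p_0 < n < p_1$ then yields the endpoint bound
$$
\|u\|_{W^{2,n,1}(\om)} \le C \|\mathcal{L}_0 u\|_{L^{n,1}(\om)}, \qquad u \in W^{2,n,1}(\om) \cap W^{1,n,1}_0(\om).
$$
For the full operator $L = \mathcal{L}_0 + b \cdot D + c$, the lower-order terms are absorbed via Stein's endpoint embedding $W^{1,n,1}(\om) \hookrightarrow L^\infty(\om)$, which yields $Du, u \in L^\infty(\om)$ for any $u \in W^{2,n,1}(\om) \cap W^{1,n,1}_0(\om)$, together with H\"older in Lorentz spaces:
$$
\|b \cdot Du\|_{L^{n,1}} \le \|b\|_{L^{n,1}} \|Du\|_{L^\infty}, \qquad \|cu\|_{L^{n,1}} \le \|c\|_{L^{n,1}} \|u\|_{L^\infty}.
$$
Uniqueness is inherited from Krylov's theorem: since $\om$ is bounded, $L^{n,1}(\om) \subset L^p(\om)$ for every $1 \le p < n$, so any null-solution $u \in W^{2,n,1} \cap W^{1,n,1}_0$ lies in $W^{2,p} \cap W^{1,p}_0$ for $p$ slightly below $n$, and the hypotheses $b, c \in L^{n,1} \subset L^n$ place us in Krylov's setting, forcing $u \equiv 0$. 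Existence then follows by the method of continuity applied to $L_t = \mathcal{L}_0 + t(b \cdot D + c)$, $t \in [0,1]$, starting from the invertibility of $L_0 = \mathcal{L}_0$ and using a uniform a priori $W^{2,n,1}$-bound (with the condition $c \ge 0$ used to absorb the remaining lower-order term through a maximum-principle-type estimate).

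For part (ii), given $G \in L^1(\om;\rn)$ I define a linear functional on $L^{n,1}(\om)$ by $T_G(f) = -\int_\om G \cdot D\psi_f\,dx$, where $\psi_f \in W^{2,n,1} \cap W^{1,n,1}_0$ is the unique solution of $L\psi_f = f$ from part (i). Stein's embedding together with the estimate in part (i) gives $\|D\psi_f\|_{L^\infty} \le C \|f\|_{L^{n,1}}$, whence $|T_G(f)| \le C \|G\|_{L^1} \|f\|_{L^{n,1}}$. By the Lorentz duality $(L^{n,1}(\om))^* \cong L^{\frac{n}{n-1},\infty}(\om)$ (\thref{dual-lorentz}), there exists a unique $v \in L^{\frac{n}{n-1},\infty}(\om)$ representing $T_G$, with $\|v\|_{L^{\frac{n}{n-1},\infty}} \le C \|G\|_{L^1}$. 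For any test function $\psi \in C^{1,1}(\overline{\om})$ with $\psi|_{\bom} = 0$, we have $\psi \in W^{2,n,1} \cap W^{1,n,1}_0$ with $L\psi \in L^{n,1}(\om)$ and, by uniqueness in part (i), $\psi = \psi_{L\psi}$, so
$$
\int_\om v \, L\psi \, dx = T_G(L\psi) = -\int_\om G \cdot D\psi\,dx,
$$
which is exactly the very-weak-solution identity for \eqref{Eq4}. The integrability requirements $vb, cv \in L^1_{loc}(\om)$ in fact hold globally by H\"older in Lorentz spaces, since $L^{\frac{n}{n-1},\infty} \cdot L^{n,1} \hookrightarrow L^{1,1} = L^1$. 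For uniqueness, if $G = 0$, then $\int_\om v L\psi\,dx = 0$ for every admissible $\psi$; using density of $\{\psi \in C^{1,1}(\overline{\om}) : \psi|_{\bom} = 0\}$ in $W^{2,n,1} \cap W^{1,n,1}_0$ (obtained by mollification and inward translation near the $C^{1,1}$-boundary), the identity extends to all $\psi \in W^{2,n,1} \cap W^{1,n,1}_0$, and taking $\psi = \psi_f$ for arbitrary $f \in L^{n,1}$ yields $\int_\om v f \, dx = 0$ for all such $f$, hence $v = 0$ by Lorentz duality.

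The main obstacle is the Lorentz-endpoint Calder\'on--Zygmund estimate for $\mathcal{L}_0$: one must realize $\mathcal{L}_0^{-1}$ as a bounded map $L^p \to W^{2,p} \cap W^{1,p}_0$ for two exponents $p_0 < n < p_1$ and identify the real interpolation spaces on both sides correctly, in particular showing $(W^{2,p_0} \cap W^{1,p_0}_0,\, W^{2,p_1} \cap W^{1,p_1}_0)_{\theta,1} = W^{2,n,1} \cap W^{1,n,1}_0$ for the appropriate $\theta$. A secondary technical point is the density of $C^{1,1}$-up-to-boundary test functions in $W^{2,n,1} \cap W^{1,n,1}_0$ used in the uniqueness of part (ii), which requires a careful approximation argument at the $C^{1,1}$-boundary.
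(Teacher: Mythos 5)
Your overall architecture matches the paper's: interpolate the VMO Calder\'on--Zygmund bound to get the $W^{2,n,1}$-isomorphism for $\mathcal{L}_0$ (this is Lemma \ref{kimtsai}(ii)), run the method of continuity with a max-principle input for the $\|u\|$-term (the paper uses Aleksandrov), and derive (ii) by duality through the solution operator of (i) exactly as you describe. Part (ii) in particular is essentially the paper's argument verbatim, including the density of $\{\psi\in C^{1,1}(\overline{\om}):\psi|_{\bom}=0\}$ in $W^{2,n,1}\cap W^{1,n,1}_0$ (Lemma \ref{dense}(iv)) for the uniqueness step.

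However, there is a genuine gap in how you propose to treat the lower-order terms in part (i). Your estimate
$$
\|b\cdot Du\|_{n,1;\om}\le C\|b\|_{n,1;\om}\|Du\|_{\infty;\om}\le C\|b\|_{n,1;\om}\|u\|_{W^{2,n,1}(\om)}
$$
returns the full $W^{2,n,1}$-norm of $u$ with coefficient proportional to $\|b\|_{n,1;\om}$, which is not assumed small; the same problem affects the $cu$ term. This is not absorbable, and the Aleksandrov maximum principle cannot fix it because it only controls $\|u\|_{\infty;\om}$, not $\|Du\|_{\infty;\om}$. What is needed is the $\epsilon$-version of this estimate: for every $\epsilon>0$,
$$
\|b\cdot Du\|_{n,1;\om}+\|cu\|_{n,1;\om}\le \epsilon\|D^2u\|_{n,1;\om}+C_\epsilon\|u\|_{n,1;\om},
$$
which the paper proves in Lemma \ref{estimate1} by the Gerhardt trick: split $b=b_k+(b-b_k)$ with $b_k\in C_c^\infty$ so that the singular part $b-b_k$ is $L^{n,1}$-small (yielding a small multiple of $\|Du\|_\infty\le C\|u\|_{W^{2,n,1}}$) while the smooth part $b_k$ is bounded and lands on $\|Du\|_{n,1}$, which is then absorbed through the Gagliardo--Nirenberg inequality (\thref{GN2}). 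Without this approximation step your a priori bound for the continuity method does not close. Once Lemma \ref{estimate1} is inserted, the remaining $\|u\|_{n,1;\om}$ term is controlled via Aleksandrov exactly as you indicate, and your alternative uniqueness argument (embedding $W^{2,n,1}\cap W^{1,n,1}_0\subset W^{2,p}\cap W^{1,p}_0$ for $p<n$ and citing Krylov) is a valid, if slightly different, route compared to the paper, which gets uniqueness directly from the method-of-continuity a priori estimate.
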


\begin{corollary}\thlabel{thm2-2}
Let $\om$ be a bounded $C^{1,1}$-domain in $\rn, n \ge  2$.
Assume that   $\div A  + b\in L^{n,1}(\om;\rn)$,
   $c\in L^{n,1}(\om) $,   and $c\ge0$ in $\om$.
 Then for each $F\in L^1(\om;\rn)$, there exists a unique very weak solution $u$ in $ L^{\frac{n}{n-1},\infty}(\om)$ of \eqref{Eq3}.
Moreover,
$$
\|u\|_{L^{\frac{n}{n-1},\infty}(\om)} \le C \|F\|_{L^1(\om;\rn)},
$$
where $C=C(n,\om,A,b,\|c\|_{n,1;\om})$.
\end{corollary}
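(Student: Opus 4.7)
The plan is to reduce Corollary \ref{thm2-2} to Theorem \ref{thm1-2} (ii) by a duality substitution, in the same spirit in which Corollary \ref{thm2} was obtained from Theorem \ref{thm1} (ii).

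First, I would make explicit the meaning of a very weak solution of \eqref{Eq3}, which the introduction indicates but does not spell out. Setting $\tilde b = \div A + b \in L^{n,1}(\om;\rn)$ and using $\div(AD\psi) = \sum_{i,j} a^{ij} D_{ij}\psi + (\div A)\cdot D\psi$, a direct computation of the formal adjoint of the operator on the left-hand side of \eqref{Eq3}, paired against a smooth test function $\psi$ vanishing on $\bom$, yields
$$
\int_\om u \Bigl(-\sum_{i,j=1}^n a^{ij} D_{ij}\psi - \tilde b \cdot D\psi + c\psi\Bigr)\,dx = -\int_\om F \cdot D\psi\,dx .
$$
Accordingly, a very weak solution of \eqref{Eq3} in $L^{\frac{n}{n-1},\infty}(\om)$ is by definition a function $u$ in that space satisfying $u|\tilde b|, cu \in L^{1}_{loc}(\om)$ and the above identity for every $\psi \in C^{1,1}(\overline\om)$ with $\psi = 0$ on $\bom$.

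Second, I would observe that this identity is exactly the one defining a very weak solution of \eqref{Eq4}, as stated just before Theorem \ref{thm1-2}, once the drift $b$ appearing in \eqref{Eq4} is replaced by $-\tilde b$ and the datum $G$ by $F$. Under this substitution the hypotheses of Theorem \ref{thm1-2} (ii) are all met: $-\tilde b \in L^{n,1}(\om;\rn)$ is equivalent to our standing assumption $\div A + b \in L^{n,1}(\om;\rn)$, while the assumptions on $A$, $c\ge 0$ with $c\in L^{n,1}(\om)$, and on the $C^{1,1}$-regularity of $\om$, are unchanged. Hence Theorem \ref{thm1-2} (ii) produces a unique such $u \in L^{\frac{n}{n-1},\infty}(\om)$ satisfying
$$
\|u\|_{L^{\frac{n}{n-1},\infty}(\om)} \le C \|F\|_{L^{1}(\om;\rn)}, \qquad C = C(n,\om,A,b,\|c\|_{n,1;\om}),
$$
which is precisely the conclusion of Corollary \ref{thm2-2}.

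The one small check is that the local integrability conditions $u|\tilde b|, cu \in L^{1}_{loc}(\om)$ are automatic: by H\"older's inequality in Lorentz spaces, the embedding $L^{\frac{n}{n-1},\infty}\cdot L^{n,1} \hookrightarrow L^{1}$ takes care of both $u\tilde b$ and $cu$ (in fact globally on $\om$, not merely locally). Thus the only genuine step is the adjoint computation above, and once it is carried out the corollary is an immediate specialization of Theorem \ref{thm1-2} (ii); there is no substantive obstacle beyond bookkeeping the sign change $b \leadsto -\tilde b$.
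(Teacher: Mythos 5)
Your proposal is correct and matches the paper's (implicit) argument: the paper states Corollary~\ref{thm2-2} is ``an immediate consequence'' of Theorem~\ref{thm1-2}~(ii), and the expected reduction is precisely the adjoint computation you carry out, showing the very weak formulation of \eqref{Eq3} with drift $b$ coincides with that of \eqref{Eq4} after the substitution $b\leadsto -(\div A + b)$, $G\leadsto F$. The verification that the hypotheses transfer and that the integrability conditions hold is routine, exactly as you note.
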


\begin{rmk}
By real interpolation (see Theorem \ref{interpolation0}), it follows from the $W^{2,p}$-results in \cite{krylov} that if  $b \in L^n (\Omega;\rn )$,  $c\in L^{n}(\om)$, and $c \ge 0$ in $\Omega$, then the Dirichlet problem  \eqref{Eq1} is uniquely  solvable in  $W_0^{1,p,q}(\Omega )\cap W^{2,p,q}(\Omega )$ for $1<p<n$ and $1 \le q \le \infty$.
Theorem \ref{thm1-2} shows  that  the problem   \eqref{Eq1} is   solvable uniquely in  $W_0^{1,p,q}(\Omega )\cap W^{2,p,q}(\Omega )$   for one  limiting case when  $p = n$ and $q = 1$ if $b$ and $c$ satisfy the hypotheses of the theorem.
\end{rmk}

Under the hypotheses of  Theorem \ref{thm1}, we can  show  that the problems  \eqref{Eq3}   and \eqref{Eq2} are dual each other. Hence Part (ii) of  Theorem \ref{thm1} is deduced  from Part (i) by a  duality argument.
In Section 4,  we prove Part (i) of  Theorem \ref{thm1} by the   method of continuity. One of the key tools for the proof is the following estimate of Gerhadt-type \cite{gerhardt} in Lorentz spaces (see \thref{estimate2}):    for any $\epsilon>0$, there is a constant $C_\epsilon = C(n,\om, b,c,\epsilon)$ such that
$$
\|ub\|_{n,1;\om} + \|cu\|_{W^{-1,n,1}(\om)} \le \epsilon \|Du\|_{n,1;\om} + C_\epsilon \|u\|_{n,1;\om}
$$
for all $u \in W^{1,n,1}(\om)$. Using this estimate together with a priori estimates for the distribution functions  (Lemma \ref{apriori est-2-1}), we derive the a priori estimate  for   weak solutions in $W^{1,n,1}_0(\om)$ of \eqref{Eq3}, which allows us to prove   Part (i) of  \thref{thm1}. The proof of  \thref{thm1-2} is essentially  the same as that of  \thref{thm1} except for using the Aleksandrov maximum principle instead of   a priori estimates for the distribution functions.

The rest of the paper is organized as follows.
In Section 2,   Lorentz spaces and  Sobolev-Lorentz spaces are studied  in some details. We  recall  rather classical  results and then prove  some useful results on  real interpolation  and density for Sobolev-Lorentz spaces.
Using  theses results, we obtain  several Gerhardt-type estimates in Section 3.
Then Sections 4 and 5 are devoted to providing  complete proofs of Theorems \ref{thm1} and \ref{thm1-2}.

\section{Preliminaries}

We begin with introducing some standard notations.

For two quasi-Banach spaces $A$ and $B$ with $A\subset B$, we write  $A \hookrightarrow B$ if  $A$ is continuously embedded into $B$, that is, there is a constant $C$ such that $\|u\|_B \le C \|u\|_A$ for all $u\in A$. Also, if $A \hookrightarrow B$ and $B\hookrightarrow A$, we write $A=B$. We denote by $A^*$ the dual space of $A$. The dual pairing of $A^*$ and $A$  will be  denoted by  $\langle \cdot , \cdot \rangle$  without specifying the spaces for the sake of  simplicity.

\subsection{Lorentz spaces}
Let $(X,\mu)$ be a measure space.
For a measurable function $f :X \to \mathbb{R}$, we define its distribution function $d_f $  by
$$
 d_f(\alpha) = \mu \left( \{ x\in X : |f(x)|>\alpha \} \right)  \quad (\alpha >0 ).
 $$
The decreasing rearrangement of $f$ is defined on $(0,\infty)$ by
 $$
 f^*(t) = \inf \left\{ \alpha >0 : d_f (\alpha)\le t \right\}.
 $$

For $1 \le p ,q \le\infty$, we define the Lorentz space $L^{p,q}(X)$ as the set of all measurable functions $f$  on $X$ such that the quantity
\begin{equation*}\label{lorentz}
\|f \|_{p,q;X}=
\begin{cases}
 \displaystyle \left( \int _0 ^\infty [t^{1/p} f^{*}(t) ]^q\frac{dt}{t} \right )^{1/q}&\text{if}\,\, q<\infty,\\
 	 \quad \displaystyle \sup_{t>0} t^{1/p} f^{*}(t) &\text{if}\,\, q=\infty,
 \end{cases}
\end{equation*}
 is finite.
 Then $L^{p,q}(X)$ is a quasi-Banach space under the quasi-norm $\| \cdot \|_{p,q;X}$.
 In fact, $\|\cdot \|_{p,q;X}$ is normable when $1<p<\infty$ (see \cite[Section 4.4]{bennett} and \cite[Section 1.4]{Gra}, e.g.).
 Moreover, $L^{\infty,q}(X)=\{0\}$ for $1 \le q <\infty$ and $L^{p,q_1}(X) \hookrightarrow L^{p,q_2}(X)$ for $1 \le p\le \infty$ and $1\le q_1 \le q_2 \le \infty$.
 If $\mu(X)$ is finite, then $L^{p_1,\infty}(X) \hookrightarrow L^{p_2,1}(X)$ for $1 \le p_2<p_1 \le \infty$.
It is also well known that $L^{p,p}(X)$ coincides with the Lebesgue space $L^p(X)$ for $1\le p \le \infty$, that is, $\|\cdot\|_{p,p;X}$ is equivalent to the $L^p$-norm $\| \cdot \|_{p;X}$.
It is easy to check that
\begin{equation}\label{lorentz-exp}
\left\| |f|^\alpha \right\|_{p,q;X} = \left\|f \right\|_{\alpha p, \alpha q;X}^\alpha
\end{equation}
for all $1\le p \le\infty$, $1\le q\le\infty$, and $\alpha\ge \max(1/p,1/q)$.

\medskip

It is standard (see, e.g., \cite[Theorem 5.3.1]{bergh}) that a Lorentz space $L^{p,q}(X)$ is the real interpolation space of two Lebesgue spaces.

\begin{lem}\thlabel{itp-lorentz}
Let $1\le p,p_1, p_2 \le \infty$ and $0<\theta<1$ satisfy
$$
p_1 \neq p_2 \quad \text{and} \quad \frac{1}{p} = \frac{1-\theta}{p_1} + \frac{\theta}{p_2}.
$$
Then for any $ 1\le q\le \infty$,
$$
\left( L^{p_1}(X) , L^{p_2}(X)\right)_{\theta,q} = L^{p,q}(X).
$$
\end{lem}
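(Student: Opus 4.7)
The plan is to reduce to the fundamental case $(p_1,p_2)=(1,\infty)$ via the reiteration theorem, and to handle that case by an explicit computation of the $K$-functional. Recall that
$$
\|f\|_{(A_0,A_1)_{\theta,q}} = \left(\int_0^\infty \bigl[t^{-\theta} K(t,f;A_0,A_1)\bigr]^q \frac{dt}{t}\right)^{1/q}
$$
(with the obvious modification when $q=\infty$), where $K(t,f)=\inf\{\|f_0\|_{A_0}+t\|f_1\|_{A_1}: f=f_0+f_1\}$. The whole argument thus rests on identifying the $K$-functional of a Lebesgue couple.

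The first main step is the classical identity
$$
K(t,f;L^1(X),L^\infty(X)) = \int_0^t f^*(s)\,ds \qquad (t>0).
$$
For the upper bound I would take the truncation $f_0=(|f|-f^*(t))_+\operatorname{sgn}f$ and $f_1=f-f_0$; the rearrangement identity $\int_X \phi(|f|)\,dx=\int_0^\infty \phi(f^*(s))\,ds$ applied to $\phi(u)=(u-f^*(t))_+$ yields $\|f_0\|_1=\int_0^t f^*(s)\,ds - t f^*(t)$, while $\|f_1\|_\infty\le f^*(t)$, so $\|f_0\|_1 + t\|f_1\|_\infty \le \int_0^t f^*(s)\,ds$. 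For the reverse, any splitting $f=g+h$ satisfies the subadditivity $f^{**}(t)\le g^{**}(t)+h^{**}(t)$ of the maximal rearrangement, hence
$$
\int_0^t f^*(s)\,ds = t f^{**}(t) \le t g^{**}(t) + t h^{**}(t) \le \|g\|_1 + t\|h\|_\infty,
$$
and taking the infimum over splittings gives the reverse inequality. Substituting into the interpolation norm with $1/p=1-\theta$ converts $\|f\|_{(L^1,L^\infty)_{\theta,q}}$ into $\|t^{1/p} f^{**}(t)\|_{L^q(dt/t)}$. Since $0<\theta<1$ forces $1<p<\infty$, Hardy's inequality (valid for all $1\le q\le\infty$ when $\theta>0$) together with the trivial pointwise bound $f^*\le f^{**}$ gives the two-sided equivalence with $\|t^{1/p} f^*(t)\|_{L^q(dt/t)}=\|f\|_{p,q;X}$, proving $(L^1,L^\infty)_{\theta,q}=L^{p,q}(X)$.

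For the general case I would invoke the reiteration theorem. When $1<p_1,p_2<\infty$, applying the identity just proved with $q=p_i$ (and using $L^{p_i,p_i}(X)=L^{p_i}(X)$) gives $L^{p_i}(X)=(L^1,L^\infty)_{1-1/p_i,p_i}$, and reiteration then yields
$$
(L^{p_1},L^{p_2})_{\theta,q}=(L^1,L^\infty)_{\eta,q}=L^{p,q}(X),
$$
where $1-\eta=(1-\theta)(1-1/p_1)+\theta(1-1/p_2)=1-1/p$. The endpoint cases in which one of $p_1,p_2$ equals $1$ or $\infty$ follow from the second paragraph after an affine change of the interpolation parameter. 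The principal technical obstacle is the Hardy-inequality passage from $f^{**}$ to $f^*$ uniformly in $q\in[1,\infty]$, together with the verification that the nonstandard representation $L^{p_i}=(L^1,L^\infty)_{1-1/p_i,p_i}$ satisfies the hypotheses of the reiteration theorem; both are classical but the $q=\infty$ endpoint is easy to mishandle.
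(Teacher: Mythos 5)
The paper does not prove this lemma; it simply cites it to \cite[Theorem~5.3.1]{bergh}. Your argument is a correct reconstruction of the standard textbook proof found in that reference: compute $K(t,f;L^1,L^\infty)=\int_0^t f^*(s)\,ds=tf^{**}(t)$ (the upper bound by the $f^*(t)$-level truncation, the lower bound by subadditivity of $f^{**}$), pass from the $f^{**}$-functional to the $f^*$-functional via Hardy's inequality (legitimate since $0<\theta<1$ forces $1<p<\infty$, and the inequality does hold uniformly in $1\le q\le\infty$), and then reduce general exponents to the $(L^1,L^\infty)$ couple by reiteration. Your closing remark about the endpoint cases $p_i\in\{1,\infty\}$ is a little compressed, but it is not a gap: the reiteration theorem as stated in Bergh--L\"ofstr\"om covers the boundary parameters $\eta_i\in\{0,1\}$ because $A_0$ and $A_1$ are automatically of class $C(0)$ and $C(1)$ respectively, so the same calculation $\eta=(1-\theta)\eta_1+\theta\eta_2=1-1/p$ goes through without a separate ``affine change'' argument. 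In short, the proposal is correct and coincides with the proof the paper is implicitly invoking.
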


By \thref{itp-lorentz}, H\"{o}lder's inequality and Young's convolution inequality can be   extended to Lorentz spaces as follows for some limiting cases:

\begin{lem}\thlabel{conv}
Let $1<p<\infty$ and $1 \le q \le \infty$.
\begin{enumerate}
\item[\textup{(i)}] If $f\in L^{p,q}(X)$ and $g\in L^\infty (X)$, then $\|f   g \|_{p,q;X} \le C(p,q) \|f\|_{p,q;X} \|g\|_{\infty;X}$.
\item[\textup{(ii)}] If $f\in L^{p,q}(X)$ and $g\in L^1(X)$, then $\|f \ast g \|_{p,q;X} \le C(p,q) \|f\|_{p,q;X} \|g\|_{1;X}$.
\end{enumerate}
\end{lem}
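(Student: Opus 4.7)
The plan is to derive both inequalities as straightforward applications of the real interpolation identification of \thref{itp-lorentz}, together with the standard fact that real interpolation preserves the boundedness of linear operators between couples of Banach spaces, with operator norm bounded by a geometric mean of the endpoint norms (up to a constant depending on $\theta$ and $q$; see e.g.\ Theorem 3.1.2 in Bergh--L\"ofstr\"om).

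For part (i), first observe that for every $1\le r\le \infty$ the pointwise multiplication map $T_g : f \mapsto fg$ is a bounded linear operator on $L^r(X)$ with $\|T_g\|_{L^r\to L^r} \le \|g\|_{\infty;X}$, a direct consequence of the pointwise bound $|fg| \le \|g\|_{\infty;X}|f|$. Now fix $p_1,p_2$ with $1 \le p_1 < p < p_2 \le \infty$, and let $\theta\in(0,1)$ be determined by $1/p = (1-\theta)/p_1 + \theta/p_2$; this is possible precisely because $1<p<\infty$. By \thref{itp-lorentz}, $L^{p,q}(X) = (L^{p_1}(X),L^{p_2}(X))_{\theta,q}$, and real interpolation of $T_g$ yields
\[
\|fg\|_{p,q;X} \le C(p,q) \|T_g\|_{L^{p_1}\to L^{p_1}}^{1-\theta} \|T_g\|_{L^{p_2}\to L^{p_2}}^{\theta} \|f\|_{p,q;X} \le C(p,q) \|g\|_{\infty;X} \|f\|_{p,q;X},
\]
which is the claim. (One may alternatively avoid interpolation entirely by noting that $d_{fg}(\alpha) \le d_f(\alpha/\|g\|_{\infty;X})$ implies $(fg)^*(t) \le \|g\|_{\infty;X} f^*(t)$, which gives the inequality directly from the definition of $\|\cdot\|_{p,q;X}$ with constant $1$; however the interpolation proof already accommodates the stated constant $C(p,q)$.)

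For part (ii), where $X$ must be a setting in which convolution is defined (e.g.\ $X=\mathbb{R}^n$), I invoke the classical Young inequality $\|f\ast g\|_r \le \|g\|_{1;X}\|f\|_r$ valid for all $1\le r\le\infty$; equivalently, the convolution operator $S_g : f \mapsto f\ast g$ is bounded on $L^r(X)$ with operator norm at most $\|g\|_{1;X}$ for every $r$. Choosing the same $p_1,p_2,\theta$ as in part (i) and again applying \thref{itp-lorentz} together with the real interpolation estimate, one obtains $\|f\ast g\|_{p,q;X} \le C(p,q) \|g\|_{1;X} \|f\|_{p,q;X}$, as desired.

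There is no genuine obstacle: the entire proof reduces to applying the $K$-method interpolation machinery to two well-known Lebesgue-space estimates. The only points requiring minor care are the verification of the endpoint boundedness of $T_g$ and $S_g$ on every $L^r$ (both trivial), and ensuring that the target exponent $p$ lies strictly between two admissible endpoints, which is possible exactly because of the hypothesis $1<p<\infty$ excluding the endpoint Lebesgue cases.
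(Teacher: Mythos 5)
Your proof is correct and follows exactly the route the paper indicates: the lemma is stated right after \thref{itp-lorentz} with the remark that it follows by real interpolation of the trivial $L^r$-boundedness of pointwise multiplication by $g\in L^\infty$ and of convolution with $g\in L^1$. Your argument, including the choice of endpoints $p_1<p<p_2$ and the appeal to the interpolation functor's exactness on operator norms, is precisely what the paper leaves to the reader.
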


The following is the general H\"older inequality for Lorentz spaces.
\begin{lem}\thlabel{holder}
Let $1\le p,p_1,p_2<\infty$ and $1\le q,q_1,q_2 \le \infty$ satisfy
$$
\frac{1}{p} = \frac{1}{p_1}+\frac{1}{p_2} \quad\text{and}\quad \frac{1}{q}\le \frac{1}{q_1}+\frac{1}{q_2}.
$$
Then for any $f\in L^{p_1,q_1}(X)$ and $g\in L^{p_2,q_2}(X)$,
$$
\|fg\|_{p,q;X}\le C \|f\|_{p_1,q_1;X} \|g\|_{p_2,q_2;X},
$$
where $C=C(p_1,p_2,q,q_1,q_2)$.
\end{lem}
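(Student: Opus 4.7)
The plan is to combine the classical product inequality for decreasing rearrangements with Hölder's inequality on $((0,\infty), ds/s)$. First I would record the pointwise rearrangement bound $(fg)^*(t) \le f^*(t/2)\,g^*(t/2)$ for all $t>0$, which follows from the standard sub-additive estimate $(fg)^*(t_1+t_2)\le f^*(t_1)\,g^*(t_2)$ applied with $t_1=t_2=t/2$. Next I would reduce to the equality case $\frac{1}{q}=\frac{1}{q_1}+\frac{1}{q_2}$: defining $\tilde q\in[1,\infty]$ by $\frac{1}{\tilde q}=\frac{1}{q_1}+\frac{1}{q_2}$, the hypothesis gives $\tilde q \le q$, and the continuous embedding $L^{p,\tilde q}(X)\hookrightarrow L^{p,q}(X)$ recorded in this section reduces matters to bounding $\|fg\|_{p,\tilde q;X}$.

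Assume first that $\tilde q<\infty$. Plugging the rearrangement bound into the definition and substituting $s=t/2$ (so $dt/t=ds/s$),
$$
\|fg\|_{p,\tilde q;X}^{\tilde q} \le 2^{\tilde q/p}\int_0^\infty \bigl[s^{1/p_1} f^*(s)\bigr]^{\tilde q}\bigl[s^{1/p_2} g^*(s)\bigr]^{\tilde q}\,\frac{ds}{s},
$$
where I use $\frac{1}{p}=\frac{1}{p_1}+\frac{1}{p_2}$ to split the power $s^{\tilde q/p}$. Since $\frac{\tilde q}{q_1}+\frac{\tilde q}{q_2}=1$ with both $q_i/\tilde q \ge 1$, the ordinary scalar Hölder inequality with these exponents on the multiplicative measure $ds/s$ factorizes the integral as $\|f\|_{p_1,q_1;X}^{\tilde q}\|g\|_{p_2,q_2;X}^{\tilde q}$, yielding the desired estimate with constant $2^{1/p}$. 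The remaining case $\tilde q=\infty$ (which forces $q_1=q_2=\infty$) follows from the same computation with suprema replacing the integrals.

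I do not anticipate a genuine obstacle; the delicate part is mostly bookkeeping. First, one must handle the cases in which some $q_i$ equals $\infty$, so that the inner Hölder step degenerates into an $L^\infty$--$L^1$ estimate on $(0,\infty)$. Second, the reduction from the inequality $\frac{1}{q}\le\frac{1}{q_1}+\frac{1}{q_2}$ to the equality case is essential, because Hölder in the Haar variable requires equality; this is precisely the point where one exploits the monotonicity $L^{p,\tilde q}\hookrightarrow L^{p,q}$ in the second index already noted in the preliminaries. The resulting constant $C$ depends on $p_1,p_2,q,q_1,q_2$ through the factor $2^{1/p}$ produced by the rearrangement substitution together with the embedding constant, matching the statement.
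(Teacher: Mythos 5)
Your proof is correct, but it follows a genuinely different route from the paper's. The paper disposes of the case $p>1$ by citing O'Neil's Theorem 3.4 directly, and then treats the remaining endpoint $p=1$ by the power identity $\|fg\|_{1,q}=\||f|^{1/\alpha}|g|^{1/\alpha}\|^{\alpha}_{\alpha,\alpha q}$ with $\alpha>1$, which lifts the problem back into the already-known regime. You instead give a self-contained argument valid uniformly for all $1\le p<\infty$: the rearrangement product bound $(fg)^*(t)\le f^*(t/2)\,g^*(t/2)$ (a correct specialization of $(fg)^*(t_1+t_2)\le f^*(t_1)g^*(t_2)$), the reduction to the equality case $\tfrac{1}{\tilde q}=\tfrac{1}{q_1}+\tfrac{1}{q_2}$ via the second-index monotonicity $L^{p,\tilde q}\hookrightarrow L^{p,q}$, and scalar H\"older on $((0,\infty),ds/s)$ with conjugate exponents $q_1/\tilde q$, $q_2/\tilde q$. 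Your approach buys transparency and avoids any appeal to O'Neil's machinery (and so also dispenses with the paper's separate $p=1$ patch), at the cost of reproving a classical fact; the paper's approach is shorter precisely because it outsources the real work to O'Neil and only fills the genuine gap at $p=1$. One small point worth making explicit in a final write-up: when exactly one of $q_1,q_2$ is $\infty$ the H\"older step becomes the $L^\infty$--$L^1$ pairing, and when both are $\infty$ the integrals become suprema; you flag this correctly, and the argument goes through in both degenerate cases.
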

\begin{proof}
This lemma was proved by O'neil \cite[Theorem 3.4]{oneil} for $p>1$.
Suppose that $p=1$.
Then choosing any $\alpha>1$, we have
\begin{align*}
\|fg \|_{1.q;X} &= \| |f|^{1/\alpha} |g|^{1/\alpha}\|^\alpha_{\alpha,\alpha q;X}\\
	 &\le C \left(\||f|^{1/\alpha} \|_{\alpha p_1 ,\alpha q_1;X}\|| g|^{1/\alpha} \|_{\alpha p_2 ,\alpha q_2;X} \right)^\alpha
	 	 = C \|f\|_{p_1,q_1;X} \|g\|_{p_2,q_2;X}. \tag*{\qedhere}
\end{align*}
\end{proof}

For $1\le p \le\infty$, let $p'= \frac{p}{p-1}$ denote the H\"older conjugate of $p$.
The following is a  duality result for Lorentz spaces (see, e. g., \cite[Theorem 1.4.17]{Gra}):

\begin{lem}\thlabel{dual-lorentz}
Suppose that $(X,\mu)$ is a nonatomic $\sigma$-finite measure space. Then for $1<p<\infty$ and $1\le q < \infty$, we have
$$
L^{p,q}(X)^* = L^{p',q'}(X).
$$
\end{lem}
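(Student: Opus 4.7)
The plan is to establish the equality $L^{p,q}(X)^{*}=L^{p',q'}(X)$ by proving the two embeddings separately, using H\"older's inequality in one direction and a Radon--Nikodym construction together with a rearrangement argument in the other.

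First I would verify the easy embedding $L^{p',q'}(X)\hookrightarrow L^{p,q}(X)^{*}$. For $g\in L^{p',q'}(X)$, define $T_{g}(f)=\int_{X}fg\,d\mu$. Since $\tfrac{1}{p}+\tfrac{1}{p'}=1$ and $\tfrac{1}{q}+\tfrac{1}{q'}=1$, the general H\"older inequality \thref{holder} (with output exponents $1,1$) yields $|T_{g}(f)|\le C\|f\|_{p,q;X}\|g\|_{p',q';X}$, so $T_{g}\in L^{p,q}(X)^{*}$ with $\|T_{g}\|\le C\|g\|_{p',q';X}$. The map $g\mapsto T_{g}$ is injective because nonatomicity of $\mu$ ensures that integration against characteristic functions of sets of finite measure already separates functions in $L^{p',q'}(X)$.

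For the reverse embedding, let $T\in L^{p,q}(X)^{*}$ and write $X=\bigcup_{k\ge 1}X_{k}$ with $\mu(X_{k})<\infty$ and the $X_{k}$ increasing. Since $\|\chi_{E}\|_{p,q;X}\le C_{p,q}\,\mu(E)^{1/p}$ for $1<p<\infty$, the set function $\nu_{k}(E):=T(\chi_{E})$ is a signed measure on $X_{k}$ absolutely continuous with respect to $\mu$. Radon--Nikodym then produces densities $g_{k}\in L^{1}(X_{k})$ which patch to a measurable $g$ on $X$ satisfying $T(s)=\int_{X}sg\,d\mu$ for every simple function $s$ supported in some $X_{k}$. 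Because $q<\infty$, such simple functions are norm-dense in $L^{p,q}(X)$ (via monotone convergence applied to the rearrangement $f^{*}$), so once the inequality $\|g\|_{p',q';X}\le C\|T\|$ is established the representation of $T$ extends by continuity to all of $L^{p,q}(X)$.

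The main obstacle is the norm estimate $\|g\|_{p',q';X}\le C\|T\|$, and it is here that nonatomicity of $\mu$ is essential. My strategy is to truncate, setting $g_{N}=g\,\chi_{\{|g|\le N\}\cap X_{N}}$, and to construct a test function $f_{N}$ so that $\int_{X}f_{N}g_{N}\,d\mu$ captures the size of $\|g_{N}\|_{p',q';X}$. Classical rearrangement theory on nonatomic $\sigma$-finite spaces provides a measure-preserving identification under which $g_{N}$ and its decreasing rearrangement $g_{N}^{*}$ on $(0,\infty)$ correspond, so the Hardy--Littlewood inequality becomes an equality for suitably aligned $f_{N}$. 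Taking $f_{N}$ equimeasurable with the one-dimensional H\"older extremizer $t\mapsto t^{(1-p)/p}[g_{N}^{*}(t)]^{q'-1}\chi_{(0,R_{N})}$ for the weighted Lebesgue norm defining $\|\cdot\|_{p,q}$, a direct computation yields $\int f_{N}g_{N}\,d\mu\ge c\,\|g_{N}\|_{p',q';X}^{q'}$ while $\|f_{N}\|_{p,q;X}\le C\,\|g_{N}\|_{p',q';X}^{q'-1}$; combined with $|T(f_{N})|\le\|T\|\,\|f_{N}\|_{p,q;X}$ this gives $\|g_{N}\|_{p',q';X}\le C\|T\|$, and letting $N\to\infty$ with Fatou completes the proof. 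The case $q=1$ requires a small adjustment since duality with $L^{p',\infty}$ naturally involves the maximal rearrangement $g^{**}$ rather than $g^{*}$, so the extremizer must be replaced by a Hardy-inequality argument on $(0,\infty)$.
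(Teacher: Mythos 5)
The paper does not actually prove \thref{dual-lorentz}; it cites \cite[Theorem 1.4.17]{Gra}. Your proposal is therefore a genuinely different route: a from-scratch argument via H\"older, Radon--Nikodym, and a rearrangement-based norm estimate, which is the standard textbook strategy. The easy embedding via \thref{holder}, the density of simple functions for $q<\infty$, the Radon--Nikodym construction of $g$ on the exhaustion $X_k$, and the truncation-plus-Fatou passage at the end are all sound. (One small overstatement: injectivity of $g\mapsto T_g$ follows from $\sigma$-finiteness alone and does not use nonatomicity; nonatomicity is what you need later, for the Hardy--Littlewood pairing to be saturable and for sets of any prescribed finite measure to exist.)

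The genuine gap is in the norm estimate $\|g\|_{p',q';X}\le C\|T\|$ via the proposed test function. First, the exponent is off: equalizing H\"older in
$$\int_0^\infty f^*(t)g^*(t)\,dt=\int_0^\infty\bigl[t^{1/p}f^*(t)\bigr]\bigl[t^{1/p'}g^*(t)\bigr]\,\frac{dt}{t}$$
with exponents $q,q'$ forces $f^*(t)\propto t^{-\frac{q-p}{p(q-1)}}\,[g^*(t)]^{q'-1}$, not $t^{(1-p)/p}[g^*(t)]^{q'-1}$; the two agree only when $p'=pq'$. Second, and more fundamentally, when $q<p$ the exponent $-\tfrac{q-p}{p(q-1)}$ is positive, so the ``extremizer'' need not be a decreasing function of $t$ and hence need not be the decreasing rearrangement of anything. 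In that case ``take $f_N$ equimeasurable with $\phi$'' does not produce the function you paired against, and the claimed inequalities $\int f_Ng_N\gtrsim\|g_N\|_{p',q'}^{q'}$ and $\|f_N\|_{p,q}\lesssim\|g_N\|_{p',q'}^{q'-1}$ do not follow. This is precisely the regime the paper needs ($p=n,\ q=1$). A correct argument must circumvent the monotonicity problem, e.g.\ by passing to $g^{**}$ and invoking Hardy's inequality, by a dyadic level-set decomposition, or most cleanly by deducing the result from the $L^p$ duality together with \thref{itp-lorentz} and the abstract duality theorem for real interpolation (this uses $q<\infty$ exactly where you use density). Also note that your final remark has it backwards: $q=1$ (so $q'=\infty$) is in fact the \emph{easier} case here, since one can simply test against $\mathrm{sgn}(g)\chi_E$ for a set $E$ of measure $t_0$ inside $\{|g|\ge g^*(t_0)\}$, which exists by nonatomicity, and read off $t_0^{1/p'}g^*(t_0)\le C\|T\|$ directly; no appeal to $g^{**}$ is required.
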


\subsection{Sobolev-Lorentz spaces}

Let $\om$ be any domain in $\rn  $  and let $k \in \mathbb{N}$.
For $(p,q)=(1,1)$ or $1 < p \le \infty$, $1\le q \le \infty$, the Sobolev-Lorentz space $W^{k,p,q}(\om)$ is defined as the space  of all $u\in L^{p,q}(\om)$ such that $D^\alpha u$ exists weakly and belongs to $L^{p,q}(\om)$ for all multi-indices $\alpha$ with $|\alpha|\le k$.
Then  $W^{k,p,q}(\om)$ is a quasi-Banach space equipped with  the quasi-norm
$$
\| u\|_{W^{k,p,q}(\om)} = \sum_{|\alpha|\le k} \|D^\alpha u\|_{p,q;\om}.
$$
We denote by $\widehat{W}^{k,p,q}(\om)$ the closure of $C_c^\infty(\om)$ in $W^{k,p,q}(\om)$. For  $1<p<\infty$, we define $W^{-k,p,q}(\om)$ as the dual space of $\widehat W^{k,p',q'}(\om)$. Note that  $W^{k,p,p}(\om)$ coincides with the usual Sobolev space $W^{k,p}(\om)$.
This motivates us to  write $\widehat W^{k,p}(\om) = \widehat W^{k,p,p}(\om)$  and  $W^{-k,p}(\om)=W^{-k,p,p}(\om)$.

The following is a fundamental  embedding theorem for Sobolev-Lorentz spaces.

\begin{lem}[\upshape{\cite[Theorem 7]{adams}}]\thlabel{ebd}
Let $\om$ be a bounded Lipschitz domain in $\rn$.
\begin{enumerate}
\item[\textup{(i)}] If $1<p<n$ and $1\le q \le\infty$, then $W^{1,p,q}(\om)\hookrightarrow L^{p^*,q}(\om)$, where $p^* =\frac{np}{n-p}$ denotes the Sobolev conjugate of $p$.
\item[\textup{(ii)}]  If $k\in\mathbb{N}$ and $1<p<\infty$ satisfy $kp=n$, then $W^{k,p,1}(\om)\hookrightarrow C(\overline \om)$.
\end{enumerate}
\end{lem}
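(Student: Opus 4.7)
The plan is to derive (i) by real interpolation of the classical Sobolev embedding on $\om$, and to obtain (ii) by iterating (i) down to the critical endpoint $k=1$, $p=n$, which I then handle by a pointwise representation via the Newtonian kernel combined with the H\"older inequality in Lorentz spaces (\thref{holder}).

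For (i), I would fix $1<p<n$ and $1\le q\le\infty$, and pick $1<p_1<p<p_2<n$ with $\theta\in(0,1)$ so that $\frac{1}{p}=\frac{1-\theta}{p_1}+\frac{\theta}{p_2}$. The classical Sobolev embeddings $W^{1,p_i}(\om)\hookrightarrow L^{p_i^*}(\om)$ hold on any bounded Lipschitz domain, and applying \thref{itp-lorentz} to the identity $\frac{1}{p_i^*}=\frac{1}{p_i}-\frac{1}{n}$ yields $\frac{1}{p^*}=\frac{1-\theta}{p_1^*}+\frac{\theta}{p_2^*}$, hence
$$
(L^{p_1^*}(\om),L^{p_2^*}(\om))_{\theta,q}=L^{p^*,q}(\om).
$$
What remains is the interpolation identification $(W^{1,p_1}(\om),W^{1,p_2}(\om))_{\theta,q}=W^{1,p,q}(\om)$, which is standard on $\rn$ since the gradient intertwines cleanly with interpolation; transferring it to $\om$ is accomplished by composing with a Stein-type extension operator $E:W^{1,r}(\om)\to W^{1,r}(\rn)$ that is bounded for $r$ in a neighbourhood of $[p_1,p_2]$, and then interpolating the composition with the classical Sobolev embedding on $\rn$.

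For (ii), with $kp=n$, I iterate (i) with $q=1$: any derivative of order $j\le k-1$ of $u\in W^{k,p,1}(\om)$ lies in $W^{1,p,1}(\om)\hookrightarrow L^{p^*,1}(\om)$ with $p^*=\frac{np}{n-p}$, so $W^{k,p,1}(\om)\hookrightarrow W^{k-1,p^*,1}(\om)$. Continuing, after $k-1$ iterations the exponents $p_j$ satisfy $\frac{1}{p_j}=\frac{1}{p}-\frac{j}{n}$, and using $\frac{1}{p}=\frac{k}{n}$ one lands in $W^{1,n,1}(\om)$. It therefore suffices to show $W^{1,n,1}(\om)\hookrightarrow C(\overline\om)$: extend $u$ by a Lipschitz-domain extension and cutoff to $\tilde u\in W^{1,n,1}(\rn)$ of compact support, and for smooth $\tilde u$ use the identity
$$
\tilde u(x)=\frac{1}{n\omega_n}\int_{\rn}\frac{(x-y)\cdot D\tilde u(y)}{|x-y|^n}\,dy.
$$
Since $|x-\cdot|^{1-n}\in L^{n',\infty}(B_R)$ with norm independent of $x$ for any $R$ containing the support of $\tilde u$, \thref{holder} gives the pointwise bound $\|\tilde u\|_{\infty}\le C\|D\tilde u\|_{n,1;\rn}\le C\|u\|_{W^{1,n,1}(\om)}$. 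Continuity on $\overline\om$ then follows from density of $C_c^\infty(\rn)$ in $W^{1,n,1}(\rn)$ (valid because the second index $q=1$ is finite) together with uniform convergence of the smooth approximants, the latter being Cauchy in $L^\infty$ by the same pointwise bound applied to differences.

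The principal obstacle lies in arranging real interpolation on the domain $\om$ itself, which requires a single extension operator that is bounded on an entire range of $L^p$-based Sobolev spaces simultaneously; Stein's construction supplies such an operator for Lipschitz boundaries, but without it one would have to redo interpolation intrinsically on $\om$, which is delicate. A secondary but essential step is density of $C_c^\infty$ in $W^{1,n,1}(\rn)$, which proceeds by mollification and relies on the continuity of translations in $L^{n,1}$ — valid precisely because the second Lorentz index is finite, so for the $q=\infty$ branch of (i) this density route is unavailable and interpolation is the only way.
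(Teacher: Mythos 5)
The paper gives no proof of \thref{ebd}; it is imported wholesale from Adams--Fournier (their Theorem 7). Your self-contained derivation is essentially correct and stays within the toolbox already set up in Section 2. For part (i), the argument reduces to the identification $\left(W^{1,p_1}(\om), W^{1,p_2}(\om)\right)_{\theta,q}=W^{1,p,q}(\om)$, which is precisely \thref{interpolation} (DeVore--Scherer); you should invoke that lemma directly rather than re-sketch it via extension, and note that the inclusion you actually need here, $W^{1,p,q}(\om)\hookrightarrow\left(W^{1,p_1}(\om),W^{1,p_2}(\om)\right)_{\theta,q}$, is the \emph{nontrivial} direction of that identification --- the remark that ``the gradient intertwines cleanly with interpolation'' yields only the reverse containment, so it does not by itself justify the step. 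For part (ii), the bootstrap to $W^{1,n,1}(\om)$ is sound (one checks $p_j<n$ at each intermediate step, with $p_{k-1}=n$ only at the last), and the endpoint embedding via the Newtonian-potential identity combined with \thref{holder} applied with exponent pairs $(n',\infty)$ and $(n,1)$ at $p=q=1$ is correct; in fact $|x-\cdot|^{1-n}\in L^{n',\infty}(\rn)$ globally, so the restriction to $B_R$ is unnecessary in that estimate. The concluding density and uniform-Cauchy argument is the standard way to pass from $C_c^\infty$ to $W^{1,n,1}$, and it is valid precisely because the second Lorentz index $q=1$ is finite, as you observe. The tradeoff relative to the paper is clear: the citation is shorter, while your route is self-contained and makes visible exactly which pieces of the Section 2 machinery (\thref{itp-lorentz}, \thref{interpolation}, \thref{holder}) the embedding depends on.
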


A Sobolev-Lorentz space is the real interpolation space of two Sobolev spaces of the same differentiability order, as shown by the  following important result essentially  due to DeVore and Scherer    \cite{devore}.

\begin{lem}\thlabel{interpolation}
Let $\om$ be the whole space $\rn$ or a bounded Lipschitz domain in $\rn$.
If $1 < p,p_1,p_2 < \infty$ and $0<\theta<1$ satisfy
$$
p_1 \neq p_2 \quad \text{and} \quad \frac{1}{p}=\frac{1-\theta}{p_1}+\frac{\theta}{p_2},
$$
then for any $k\in \mathbb{N}$ and $1\le q \le\infty$,
\begin{equation*}
\left(W^{k,p_1}(\om), W^{k,p_2}(\om) \right)_{\theta,q} = W^{k,p,q}(\om).
\end{equation*}
\end{lem}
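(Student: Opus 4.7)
The plan is to establish the identity first for $\om = \rn$ and then to transfer it to a bounded Lipschitz domain by means of Stein's universal extension operator.

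For $\om = \rn$, the key ingredient is the $K$-functional equivalence
$$
K\bigl(t, u; W^{k,p_1}(\rn), W^{k,p_2}(\rn)\bigr) \approx \sum_{|\alpha| \le k} K\bigl(t, D^\alpha u; L^{p_1}(\rn), L^{p_2}(\rn)\bigr),
$$
valid uniformly in $t>0$ and $u\in W^{k,p_1}(\rn)+W^{k,p_2}(\rn)$. This is the content of the DeVore--Scherer theorem \cite{devore}, whose proof constructs, for each $t>0$, an explicit decomposition $u=u_0+u_1$ using convolution with a mollifier at an appropriate scale. Granted this, raising both sides to the $q$-th power against $t^{-\theta q-1}\,dt$ and applying Lemma \thref{itp-lorentz} componentwise yields
$$
\|u\|_{(W^{k,p_1}(\rn),\, W^{k,p_2}(\rn))_{\theta,q}} \approx \sum_{|\alpha|\le k} \|D^\alpha u\|_{p,q;\rn} = \|u\|_{W^{k,p,q}(\rn)}.
$$
A parallel route avoids \cite{devore}: since $1<p_1,p_2<\infty$, the Bessel potential $(1-\Delta)^{-k/2}$ is, by the Mihlin multiplier theorem, a simultaneous topological isomorphism $L^{p}(\rn)\to W^{k,p}(\rn)$ for every $1<p<\infty$; because interpolation commutes with a single bounded isomorphism, Lemma \thref{itp-lorentz} then reduces the matter to identifying the Lorentz Bessel potential space with $W^{k,p,q}(\rn)$, which is again Mihlin plus real interpolation.

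For $\om$ a bounded Lipschitz domain, I would invoke Stein's universal extension operator, a single bounded linear map $E: W^{k,p}(\om) \to W^{k,p}(\rn)$ valid for every $1\le p\le\infty$, with operator norm depending only on $\om$, $n$, $k$, and $p$. Paired with the restriction $R$, which satisfies $R\circ E=\mathrm{id}$, this furnishes a co-retraction/retraction pair compatible with both Banach couples. The abstract interpolation lemma for retracts (see \cite[Theorem~6.4.2]{bergh}) then yields
$$
\bigl(W^{k,p_1}(\om), W^{k,p_2}(\om)\bigr)_{\theta,q} = R\bigl[\bigl(W^{k,p_1}(\rn), W^{k,p_2}(\rn)\bigr)_{\theta,q}\bigr] = R\bigl[W^{k,p,q}(\rn)\bigr] = W^{k,p,q}(\om),
$$
the final equality following because $E$ provides, by real interpolation of its simultaneous Lebesgue bounds, a bounded right inverse to $R$ already at the Lorentz level.

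The main obstacle is Step~1: the $K$-functional equivalence on $\rn$ is not formal and requires genuine harmonic-analytic input (either the DeVore--Scherer constructive decomposition, or Mihlin through the Bessel potential route). Once that is in hand, Step~2 is essentially soft, since the existence of a universal extension for Lipschitz domains is classical and the retraction machinery does the rest. A minor technical point to be careful about is the case $q=\infty$, which is automatically covered by the $K$-method formulation but should be checked explicitly against the normability of $W^{k,p,\infty}(\om)$.
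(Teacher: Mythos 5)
Your plan splits the argument into $\rn$ and a transfer step; the paper does neither, and the transfer step as you describe it is circular. To apply the retraction lemma you must exhibit a bounded right inverse of the restriction $R$ at the Lorentz level, i.e.\ you must know that Stein's operator $E$ maps $W^{k,p,q}(\om)$ boundedly into $W^{k,p,q}(\rn)$. You justify this ``by real interpolation of its simultaneous Lebesgue bounds,'' but interpolating the pair $E\colon W^{k,p_i}(\om)\to W^{k,p_i}(\rn)$ only gives boundedness from $\bigl(W^{k,p_1}(\om),W^{k,p_2}(\om)\bigr)_{\theta,q}$ into $W^{k,p,q}(\rn)$. To replace that source by $W^{k,p,q}(\om)$ you would need exactly the identity $\bigl(W^{k,p_1}(\om),W^{k,p_2}(\om)\bigr)_{\theta,q}=W^{k,p,q}(\om)$ that you are trying to prove; the trivial inclusion only goes the wrong way. (The paper's \thref{extension} establishes the Lorentz boundedness of $E$ \emph{as a consequence} of \thref{interpolation}, so it cannot be used to prove it.) A Bessel-potential/Mihlin argument, which you use as an alternative on $\rn$, also has no analogue on a bounded domain.

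The paper avoids this altogether: it invokes DeVore--Scherer, whose cited Theorem~2 already gives
$\bigl(W^{k,1}(\om),W^{k,\infty}(\om)\bigr)_{\theta,q}=W^{k,\frac{1}{1-\theta},q}(\om)$
directly for bounded Lipschitz domains as well as for $\rn$, and then obtains the general $(p_1,p_2)$ case by a single application of the reiteration theorem. This both handles both geometries in one stroke and sidesteps the need for an extension operator. Your route on $\rn$ is also stated more strongly than the literature supports: the $K$-functional equivalence you attribute to DeVore--Scherer is for the $(L^1,L^\infty)$ endpoint pair; the version you write down for arbitrary $1<p_1\neq p_2<\infty$ would itself require reiteration or a separate construction, which is precisely the step the paper makes explicit. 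If you want to repair your argument, either prove \thref{extension} directly (tracking Lorentz quasi-norms through Stein's construction, e.g.\ via the boundedness of the Hardy--Littlewood maximal operator on $L^{p,q}$), or, more economically, follow the paper and reduce everything to the $(W^{k,1},W^{k,\infty})$ endpoint result plus reiteration.
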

\begin{proof}
In fact, DeVore and Scherer   \cite[Theorem 2]{devore} proved that
\begin{equation*}
\left(W^{k,1}(\om), W^{k,\infty}(\om) \right)_{\theta,q}= W^{k,\frac{1}{1-\theta},q}(\om).
\end{equation*}
To prove the lemma, we define
$$
\theta_1=1-\frac{1}{p_1} \quad\text{and}\quad \theta_2 = 1-\frac{1}{p_2}.
$$
Then by the reiteration theorem (see, e.g., \cite[Theorem 3.5.3]{bergh}), we get
\begin{align*}
\left(W^{k,p_1}(\om),W^{k,p_2}(\om)\right)_{\theta,q}& = \left((W^{k,1}(\om),W^{k,\infty}(\om))_{\theta_1 ,p_1},(W^{k,1}(\om),W^{k,\infty}(\om))_{\theta_2 ,p_2}\right)_{\theta,q}\\
	& =\left(W^{k,1}(\om),W^{k,\infty}(\om)\right)_{(1-\theta)\theta_1 + \theta \theta_2,q} = W^{k,p,q}(\om). \tag*{\qedhere}
\end{align*}
\end{proof}

\begin{lem}\thlabel{extension}
Let $\om$ be a bounded Lipschitz domain in $\rn$. Then there exists a linear operator $E$ from $L^1(\om)$ into $L^1(\rn)$ such that
\begin{enumerate}
\item[\textup{(i)}] $Eu = u$ in $\om$ for all $u\in L^1(\om)$ and
\item[\textup{(ii)}] $E$ is bounded from $W^{k,p,q}(\om)$ into $W^{k,p,q}(\rn)$ for all $k\in \mathbb{N}$, $1<p<\infty$, and $1\le q \le\infty$.
\end{enumerate}

\end{lem}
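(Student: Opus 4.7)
The plan is to invoke Stein's universal extension theorem and then upgrade it to the Lorentz scale by real interpolation. Recall that for any bounded Lipschitz domain $\om$, Stein constructed a \emph{single} linear operator $E : L^{1}(\om) \to L^{1}(\rn)$ with $Eu = u$ on $\om$ that is simultaneously bounded from $W^{k,r}(\om)$ into $W^{k,r}(\rn)$ for every $k \in \mathbb{N}$ and every $1 \le r \le \infty$. The key feature is that one and the same $E$ handles every Lebesgue exponent at once; this is precisely what makes the interpolation step below possible.

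With $E$ fixed once and for all, let $k \in \mathbb{N}$, $1 < p < \infty$, and $1 \le q \le \infty$ be arbitrary. Choose $p_{1}, p_{2}$ with $1 < p_{1} < p < p_{2} < \infty$ and $\theta \in (0,1)$ satisfying $\frac{1}{p} = \frac{1-\theta}{p_{1}} + \frac{\theta}{p_{2}}$. Since $E$ is bounded from $W^{k,p_{i}}(\om)$ to $W^{k,p_{i}}(\rn)$ for $i = 1, 2$, the general principle that a linear map bounded between two interpolation couples is bounded between their real interpolation spaces (see Bergh--L\"ofstr\"om) yields that $E$ is bounded from $\left(W^{k,p_{1}}(\om), W^{k,p_{2}}(\om)\right)_{\theta,q}$ into $\left(W^{k,p_{1}}(\rn), W^{k,p_{2}}(\rn)\right)_{\theta,q}$.

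Identifying both sides via \thref{interpolation} (DeVore--Scherer, which applies equally to $\rn$ and to bounded Lipschitz domains) gives $W^{k,p,q}(\om)$ and $W^{k,p,q}(\rn)$, respectively, proving (ii); property (i) is built into Stein's construction.

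The main obstacle to flag is the need for one universal $E$ working across all exponents simultaneously: if one only had a $p$-dependent extension operator, the interpolation step would collapse because the operators on the endpoints would not agree. This issue is resolved entirely by Stein's theorem, after which the argument reduces to a direct application of \thref{interpolation}. A minor point to check is that for $p = \infty$ one does not need to include $W^{k,\infty}$ in the interpolation couple; since we only require $1 < p < \infty$, restricting $p_{1}, p_{2}$ to the open interval $(1, \infty)$ suffices and stays within the range of \thref{interpolation}.
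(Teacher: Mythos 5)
Your proposal is correct and follows essentially the same route as the paper: invoke Stein's universal extension operator (one $E$ bounded simultaneously on all $W^{k,r}(\om)$) and then upgrade to the Lorentz scale by real interpolation via \thref{interpolation}. The paper's proof is just a terser version of what you wrote, and your remark about needing a single universal $E$ rather than a $p$-dependent one is exactly the point that makes the interpolation step go through.
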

\begin{proof}
By \cite[Theorem 6.5]{stein}, there is a linear operator $E$ from $L^1(\om)$ into $L^1(\rn)$ such that $Eu=u$ in $\om$ for all $u\in L^1(\om)$ and $E$ is bounded from $W^{k,r}(\om)$ into $W^{k,p}(\rn)$ for all $k\in\mathbb{N}$ and $1\le p\le\infty$. Hence the lemma   immediately follows  from \thref{interpolation} by real interpolation.
\end{proof}

Let $\om$ be a bounded Lipschitz domain in $\rn$. Then there is a unique bounded linear operator $\tr$ from $W^{1,1}(\om)$ to $L^1(\partial \om)$ such that $\textup{Tr}\, u = u |_{\partial \om}$ for all $u \in C^\infty(\overline{\om})$. Moreover, the trace operator $\tr$ is bounded from $W^{1,p}(\om)$ into $L^p(\bom)$ for all $1\le p<\infty$.
Hence it follows from Lemmas \ref{itp-lorentz} and \ref{interpolation} that $\tr$ is bounded from $W^{1,p,q}(\om)$ into $L^{p,q}(\bom)$ for all $1<p<\infty$ and $1\le q \le \infty$.
For $(p,q)=(1,1)$ or $1<p\le \infty$, $1\le q\le \infty$, we define $W^{1,p,q}_0(\om)$ as the space of all $u \in W^{1,p,q}(\om)$ such that $\tr u=0$ on $\partial \om$.
In particular, we define $W^{1,p}_0(\om) = W^{1,p,p}_0(\om)$ for $1\le p \le \infty$.
Then it is well known that $W^{1,p}_0(\om)=\wh W^{1,p}(\om)$ for $1\le p<\infty$.

Consider the following Dirichlet problems for elliptic equations with no lower order terms:
\begin{equation}\label{Poisson equation}
\left\{
\begin{aligned}
-\sum_{i,j=1}^n a^{ij}D_{ij} u &= f \quad \text{in $\om$}, \\
u &= 0 \quad \text{on $\partial \om$}
\end{aligned}
\right.
\end{equation}
and
\begin{equation}\label{Poisson equation2}
\left\{
\begin{aligned}
-\div  ( A D  u  ) &= \div F \quad \text{in $\om$}, \\
u &= 0 \,\,\,\,\qquad  \text{on $\partial \om$} ,
\end{aligned}
\right.
\end{equation}
where $A =[a^{ij}]$ satisfies the hypotheses (A1), (A2), and (A3) in Introduction.

The following    is a unique solvability   result in Sobolev-Lorentz spaces for  the problems   \eqref{Poisson equation} and \eqref{Poisson equation2}
with data in the Lorentz spaces $L^{p,q}$.

\begin{lem}\thlabel{kimtsai}
Let $1<p<\infty$ and $1\le q \le \infty$.
\begin{enumerate}[{\upshape(i)}]
\item Let $\om$ be a bounded $C^1$-domain in $\mathbb{R}^n  $. Then for each $F\in L^{p,q}(\om;\mathbb{R}^n)$, there exists a unique weak solution $u\in W^{1,p,q}_0(\om)$ of \eqref{Poisson equation2}.
Moreover,
$$
\|u \|_{W^{1,p,q}(\om)}\le C\|F\|_{p,q;\om},
$$
where $C=C(n,\om,A,p,q)$.
\item Let $\om$ be a bounded $C^{1,1}$-domain in $\rn  $. Then for each  $f\in L^{p,q}(\om)$, there exists a unique strong solution $u\in W^{2,p,q}(\om)\cap W^{1,p,q}_0(\om)$ of \eqref{Poisson equation}.
Moreover,
$$
\|u \|_{W^{2,p,q}(\om)}\le C\|f\|_{p,q;\om},
$$
where $C=C(n,\om,A,p,q)$.
\end{enumerate}
\end{lem}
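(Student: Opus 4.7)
The plan is to combine the classical $L^{r}$-solvability theory for the two problems with the real interpolation results of \thref{itp-lorentz} and \thref{interpolation}. I will give the argument for part (i) in detail; part (ii) is completely analogous, replacing the $W^{1,r}$-theory on bounded $C^1$-domains by the $W^{2,r}$-theory on bounded $C^{1,1}$-domains.

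First, fix $1<p_1<p<p_2<\infty$ and $\theta\in(0,1)$ with $\frac{1}{p}=\frac{1-\theta}{p_1}+\frac{\theta}{p_2}$. By the classical $W^{1,r}$-theory for divergence-form equations with VMO coefficients on bounded $C^1$-domains (as cited in the introduction), for each $r\in\{p_1,p_2\}$ and each $F\in L^r(\om;\rn)$ there exists a unique weak solution $T_r F\in W_0^{1,r}(\om)$ of \eqref{Poisson equation2} satisfying $\|T_r F\|_{W^{1,r}(\om)}\le C\|F\|_{r;\om}$. By uniqueness, $T_{p_1}$ and $T_{p_2}$ agree on $L^{p_1}(\om;\rn)\cap L^{p_2}(\om;\rn)$, so they assemble into a common bounded operator $T$ defined on $L^r$ with values in $W^{1,r}_0$ for $r=p_1,p_2$.

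Next, by real interpolation, \thref{itp-lorentz} and \thref{interpolation} yield $(L^{p_1},L^{p_2})_{\theta,q}=L^{p,q}$ and $(W^{1,p_1},W^{1,p_2})_{\theta,q}=W^{1,p,q}$, so $T$ extends to a bounded linear operator $T:L^{p,q}(\om;\rn)\to W^{1,p,q}(\om)$ with $\|TF\|_{W^{1,p,q}(\om)}\le C\|F\|_{p,q;\om}$, which establishes the asserted norm estimate. To check that the range actually lies in $W_0^{1,p,q}(\om)$, I will use a K-functional decomposition: for each $t>0$, write $F=F_0+F_1$ with $F_0\in L^{p_1}$ and $F_1\in L^{p_2}$; then $TF=TF_0+TF_1\in W_0^{1,p_1}(\om)+W_0^{1,p_2}(\om)\subset W_0^{1,p_1}(\om)$, using that $\om$ has finite measure. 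Hence $\tr(TF)=0$ in $L^{p_1}(\bom)$, and by the consistency of the trace operator on $W^{1,p,q}(\om)$ with the one on $W^{1,p_1}(\om)$ the trace also vanishes in $L^{p,q}(\bom)$, so $TF\in W_0^{1,p,q}(\om)$.

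For uniqueness, suppose $u\in W_0^{1,p,q}(\om)$ is a weak solution of \eqref{Poisson equation2} with $F=0$. Since $\om$ is bounded, the embedding chain $L^{p,q}(\om)\hookrightarrow L^{p,\infty}(\om)\hookrightarrow L^{p_1,1}(\om)\hookrightarrow L^{p_1}(\om)$ applied to both $u$ and $|Du|$ gives $u\in W_0^{1,p_1}(\om)$, and classical uniqueness in $W_0^{1,p_1}$ yields $u\equiv 0$. The main technical point I anticipate is the zero boundary-trace verification when $q=\infty$, where simple functions are not dense in $L^{p,\infty}$ and a direct approximation argument would fail; the K-functional decomposition above is designed precisely to sidestep that density issue by directly exhibiting $TF$ as a sum of functions with zero trace in the classical sense.
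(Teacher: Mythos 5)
Your overall approach---interpolating the solution operator between two Lebesgue exponents $p_1<p<p_2$---is the same one the paper takes; the paper invokes a ``standard real interpolation argument'' (deferring to a cited reference for details) and spends its effort on the $p=q$ case. Your handling of the zero-trace issue is correct, although the K-functional decomposition is more machinery than the situation calls for: since $\Omega$ has finite measure, $L^{p,q}(\Omega;\mathbb{R}^n)\hookrightarrow L^{p_1}(\Omega;\mathbb{R}^n)$ outright, so one can take $F_0=F$ and $F_1=0$ and read off $TF=T_{p_1}F\in W^{1,p_1}_0(\Omega)$ directly. Your uniqueness argument is likewise fine.

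Where the proposal falls short is part (ii), which you dismiss as ``completely analogous.'' In fact almost the whole of the paper's proof is devoted to securing the $p=q$ input for part (ii) in the range $n\le p<\infty$. The paper notes that Chiarenza--Frasca--Longo give solvability, but that the estimate $\|u\|_{W^{2,p}(\Omega)}\le C(n,\Omega,A,p)\|f\|_{p;\Omega}$ with the constant depending only on those quantities is available from Krylov only for $1<p<n$. For $n\le p<\infty$ it must be assembled: starting from the Dong--Kim a priori bound $\|u\|_{W^{2,p}(\Omega)}\le C\bigl(\|f\|_{p;\Omega}+\|u\|_{p;\Omega}\bigr)$, the term $\|u\|_{p;\Omega}$ is eliminated via the Sobolev embedding $\|u\|_{p;\Omega}\le C\|u\|_{W^{2,p_0}(\Omega)}$ for a fixed $p_0\in(n/2,n)$, Krylov's estimate at that smaller exponent, and H\"older's inequality. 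Your proposal simply invokes ``the $W^{2,r}$-theory on bounded $C^{1,1}$-domains'' as though the Lebesgue-space estimate were directly quotable with the stated constant dependence for all $1<r<\infty$, which the paper explicitly argues is not the case; so as written the interpolation step for part (ii) rests on an unverified input.
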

\begin{proof} By a standard real interpolation argument based on    Lemmas \ref{itp-lorentz} and \ref{interpolation}, it suffices to prove the lemma for the special case when $1< p=q< \infty$ (see the proofs  of \cite[Propositions 3.13 and 3.14]{kim} for details).

Suppose that $1< p=q< \infty$. Then   the lemma was already proved by Auscher-Qafsaoui \cite{AQ} and  Chiarenza-Frasca-Longo \cite{CFL} (see also \cite{CaTr}), except for showing  that  the constant $C$ in Part (ii) depends only on $n$, $\Omega$, $A$, and $p=q$. This explicit dependence of $C$ was shown by Krylov \cite[Theorem 4.2]{krylov} if $1<p<n$. Suppose now that $n \le p < \infty$, and   let $u\in W^{2,p}(\om)\cap W^{1,p}_0(\om)$ be a strong solution of \eqref{Poisson equation} with $f\in L^p (\Omega )$.
Then by \cite[Theorem 8]{DK}, there exists  a constant  $C=C(n,\om,A,p)$ such that
$$
\|u \|_{W^{2,p}(\om)}\le C \left( \|f\|_{p;\om} + \|u\|_{p;\om} \right).
$$
Choose any $p_0$ with $n/2 < p_0 < n $. Then by the Sobolev embedding theorem and \cite[Theorem 4.2]{krylov}, we have
\[
\|u\|_{p;\om}   \le C \|u\|_{W^{2,p_0} (\om)} \le C   \|f\|_{p_0 ;\om} \le C   \|f\|_{p;\om} ,
\]
where $C=C(n,\om,A,p)$. This completes the proof of the lemma.
\end{proof}

\thref{kimtsai} can be used to prove the following interpolation results.

\begin{thm}\thlabel{interpolation0} Let $1<p,p_1,p_2<\infty$ and $0<\theta<1$ satisfy
$$
p_1 \neq p_2 \quad \text{and} \quad \frac{1}{p}=\frac{1-\theta}{p_1}+\frac{\theta}{p_2}.
$$

\begin{enumerate}[{\upshape(i)}]
\item Let $\om$ be a bounded $C^1$-domain in $\mathbb{R}^n  $. Then for any $1\le q \le \infty$,
\begin{equation}\label{interpolation0-1}
W^{1,p,q}_0(\om)= \left (W^{1,p_1}_0(\om),W^{1,p_2}_0(\om) \right)_{\theta,q}
\end{equation}
and
\begin{equation}\label{interpolation0-2}
W^{-1,p,q}(\om)= \left (W^{-1,p_1}(\om),W^{-1,p_2}(\om) \right)_{\theta,q}.
\end{equation}
\item Let $\om$ be a bounded $C^{1,1}$-domain in $\rn  $. Then  for any $1\le q \le \infty$,
\begin{equation}\label{interpolation0-12}
W^{1,p,q}_0(\om) \cap W^{2,p,q} (\om)= \left (W^{1,p_1}_0(\om) \cap W^{2,p_1} (\om),W^{1,p_2}_0(\om)\cap W^{2,p_2} (\om) \right)_{\theta,q}.
\end{equation}
\end{enumerate}
\end{thm}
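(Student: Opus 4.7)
The plan is to deduce all three interpolation identities from retraction--coretraction structures supplied by the solution operators of \thref{kimtsai}, together with the Lorentz interpolation identity $(L^{p_1},L^{p_2})_{\theta,q}=L^{p,q}$ (\thref{itp-lorentz}) and, for one inclusion in Parts (i) and (iii), the Sobolev interpolation identity of \thref{interpolation}. Part (ii) is then obtained from Part (i) by real interpolation duality.

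For Part (i), I would define $T\colon L^r(\om;\rn)\to W^{1,r}_0(\om)$ by $TF=u$, where $u$ is the unique weak solution of $-\div(ADu)=\div F$ produced by \thref{kimtsai}(i) with $p=q=r$; $T$ is bounded for every $1<r<\infty$. Pair $T$ with the bounded coretraction $Su=-ADu$, which satisfies $TS=\textup{id}$ on each $W^{1,r}_0(\om)$. Real interpolation of this retract--coretract pair together with \thref{itp-lorentz} yields bounded maps
\[
T\colon L^{p,q}(\om;\rn)\to (W^{1,p_1}_0(\om),W^{1,p_2}_0(\om))_{\theta,q},\quad S\colon (W^{1,p_1}_0(\om),W^{1,p_2}_0(\om))_{\theta,q}\to L^{p,q}(\om;\rn)
\]
still satisfying $TS=\textup{id}$. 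Now \thref{kimtsai}(i) applied directly with Lorentz data gives the analogous diagram with $(W^{1,p_1}_0,W^{1,p_2}_0)_{\theta,q}$ replaced by $W^{1,p,q}_0(\om)$; for $u\in W^{1,p,q}_0$ the consistency $u=T(Su)$ then yields the continuous embedding $W^{1,p,q}_0\hookrightarrow (W^{1,p_1}_0,W^{1,p_2}_0)_{\theta,q}$. The reverse embedding comes from \thref{interpolation} combined with interpolation of the trace operator, giving the desired equality with equivalent norms. Part (iii) is handled by the identical scheme, with $T$ replaced by the isomorphism $f\mapsto u$, $-\sum a^{ij}D_{ij}u=f$, furnished by \thref{kimtsai}(ii).

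For Part (ii), I would use interpolation duality. When $1<q\le\infty$ (so $1\le q'<\infty$), a standard mollification argument in the separable Lorentz space $L^{p',q'}(\om)$ shows $\wh W^{1,p',q'}(\om)=W^{1,p',q'}_0(\om)$, and Part (i) applied to the exponents $(p',q')$ identifies this space with $(W^{1,p_1'}_0,W^{1,p_2'}_0)_{\theta,q'}$. The Bergh--L\"ofstr\"om duality theorem for real interpolation (valid for $q'<\infty$) then yields
\[
W^{-1,p,q}(\om)=(\wh W^{1,p',q'}(\om))^*=((W^{1,p_1'}_0)^*,(W^{1,p_2'}_0)^*)_{\theta,q}=(W^{-1,p_1}(\om),W^{-1,p_2}(\om))_{\theta,q}.
\]

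The main obstacle is the borderline case $q=1$, $q'=\infty$, where $L^{p',\infty}(\om)$ is non-separable and $\wh W^{1,p',\infty}(\om)$ is a proper subspace of $W^{1,p',\infty}_0(\om)$; the standard duality formula is then unavailable for the $K$-method space $(W^{1,p_1'}_0,W^{1,p_2'}_0)_{\theta,\infty}=W^{1,p',\infty}_0$ obtained from Part (i). I would handle this by passing to the Bergh--L\"ofstr\"om $J$-method and verifying
\[
\wh W^{1,p',\infty}(\om)=(W^{1,p_1'}_0,W^{1,p_2'}_0)_{\theta,\infty;J},
\]
by observing that both sides coincide with the closure of $C_c^\infty(\om)$ (equivalently, of $W^{1,p_1'}_0\cap W^{1,p_2'}_0$) in the $K$-method space $W^{1,p',\infty}_0(\om)$. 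The $J$-method duality identity $((\bar A)_{\theta,\infty;J})^*=(\bar A^*)_{\theta,1;K}$ then closes the case $q=1$.
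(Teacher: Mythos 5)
Your Parts (i) and (iii) take essentially the same route as the paper: a retract--coretract pair built from the solution operator $T$ of \thref{kimtsai} and the map $u\mapsto -ADu$ gives one inclusion by the identity $u=T(-ADu)$, and the reverse inclusion follows from \thref{interpolation} plus continuity of the trace under interpolation. This matches the paper's proof of \eqref{interpolation0-1} and \eqref{interpolation0-12}.

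For Part (ii), however, your treatment of the borderline case $q=1$ contains an error. You propose to verify $\wh W^{1,p',\infty}(\om)=(W^{1,p_1'}_0,W^{1,p_2'}_0)_{\theta,\infty;J}$ and then apply a $J$--$K$ duality identity. But by the Equivalence Theorem (Bergh--L\"ofstr\"om, Theorem 3.3.1), the $J$-method and $K$-method produce the \emph{same} space with equivalent norms for every $1\le q\le\infty$, including $q=\infty$. Hence $(W^{1,p_1'}_0,W^{1,p_2'}_0)_{\theta,\infty;J}=(W^{1,p_1'}_0,W^{1,p_2'}_0)_{\theta,\infty;K}=W^{1,p',\infty}_0(\om)$ by Part (i), which you yourself observe is strictly larger than $\wh W^{1,p',\infty}(\om)$. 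So the identity you want to verify is false, and the duality formula you cite, applied with $(\bar A)_{\theta,\infty;J}=W^{1,p',\infty}_0(\om)$, would not recover $W^{-1,p,1}(\om)=(\wh W^{1,p',\infty}(\om))^*$. The correct object to dualize is not a $J$-space but the closure of $A_0\cap A_1$ (equivalently of $C_c^\infty(\om)$) \emph{inside} the $K$-space $(\bar A)_{\theta,\infty}$, often written $(\bar A)^0_{\theta,\infty}$; the relevant duality statement is $[(\bar A)^0_{\theta,\infty}]^*=(\bar A^*)_{\theta,1}$. In fact, the paper handles all $1\le q\le\infty$ uniformly and without any case split by defining $(\bar A)^0_{\theta,q'}$ to be the closure of $C_c^\infty(\om)$ in $(\bar A)_{\theta,q'}$, noting that this closure is exactly $\wh W^{1,p',q'}(\om)$ (by Part (i) and \thref{dense} when $q'<\infty$, and by definition when $q'=\infty$), and applying $[(\bar A)^0_{\theta,q'}]^*=(\bar A^*)_{\theta,q}$. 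Your intended resolution is therefore the right idea --- pass to the closed span of smooth compactly supported functions --- but it must be formulated in terms of the closure inside the $K$-space rather than the $J$-method, and once phrased that way the case distinction between $q>1$ and $q=1$ becomes unnecessary.
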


\begin{proof} Assume that $\om$ is  a bounded $C^1$-domain in $\mathbb{R}^n$.

Suppose that $u\in \left(W_0^{1,p_1}(\om),W_0^{1,p_2}(\om)\right)_{\theta,q}$. Then by \thref{interpolation},
$$
u\in \left(W^{1,p_1}(\om),W^{1,p_2}(\om) \right)_{\theta,q}=W^{1,p,q}(\om).
$$
 Since $u$ has trivial boundary trace, we see that $u\in W^{1,p,q}_0(\om)$.
 To prove the converse, we may assume that $p_1<p_2$.
 Let $T:L^{p_1}(\om;\mathbb{R}^n)\rightarrow W_0^{1,p_1}(\om)$  be the solution operator of \eqref{Poisson equation2};
 that is, for every $F\in L^{p_1}(\om;\rn)$,
$u=T(F)$ is a unique weak solution in $W_0^{1,p_1}(\om)$ of \eqref{Poisson equation2}.
 The existence of $T$ is guaranteed by \thref{kimtsai} (i).
 Moreover,  \thref{kimtsai} (i) shows that $T$ is bounded from $L^{p_i}(\om;\rn)$ into $W^{1,p_i}_0(\om)$ for $i=1,2$.
 Therefore, by \thref{itp-lorentz}, $T$ is bounded from $L^{p,q}(\om;\rn)$ into $\left(W_0^{1,p_1}(\om),W_0^{1,p_2}(\om)\right)_{\theta,q}$.
 Now suppose that $u\in W^{1,p,q}_0(\om)$ and $F=-A Du$.
 Then $F\in L^{p,q}(\om;\rn)$  and $u$ is a weak solution of \eqref{Poisson equation2}.
 By the uniqueness of weak solutions in $W^{1,p_1}_0(\om)$ of $\eqref{Poisson equation2}$, we conclude that $u=T(F)\in \left(W_0^{1,p_1}(\om),W_0^{1,p_2}(\om)\right)_{\theta,q}$.
 This proves \eqref{interpolation0-1}.

 To prove \eqref{interpolation0-2}, we recall that $W^{-1,r}(\om)=W^{1,r'}_0(\om)^*$ for $1<r<\infty$.
 Then \eqref{interpolation0-2} can be deduced from \eqref{interpolation0-1} by the duality theorem (see, e.g., \cite[Theorem 3.7.1]{bergh}) in real interpolation theory.
 Indeed, if we denote by $\left(W^{1,p_1'}_0(\om) , W^{1,p_2'}_0(\om)\right)^0_{\theta,q'}$   the completion of $C_c^\infty(\om)$ in $\left( W^{1,p_1'}_0(\om), W^{1,p_2'}_0(\om)\right)_{\theta,q'}$, then
 \begin{align*}
 \left( W^{-1,p_1}(\om), W^{-1,p_2}(\om)\right)_{\theta,q} &= \left( W^{1,p'_1}_0(\om)^* , W^{1,p_2'}_0(\om)^*\right)_{\theta,q} \\
 	& = \left[ \left( W^{1,p_1'}_0(\om), W^{1,p_2'}_0(\om)\right)_{\theta,q'}^0 \right]^*.
 \end{align*}
 It was already shown that $\left(W^{1,p_1'}_0(\om),W^{1,p_2'}_0(\om)\right)_{\theta,q'}= W^{1,p',q'}_0(\om)$.
 Therefore,
 $$
 \left(W^{-1,p_1}(\om) , W^{-1,p_2}(\om)\right)_{\theta,q} = \widehat W^{1,p',q'}(\om)^* = W^{-1,p,q}(\om).
 $$

 Assume in addition that $\om$ is  of class $C^{1,1}$. Then the proof of (\ref{interpolation0-12}) is exactly the same as \eqref{interpolation0-1}, except for using \thref{kimtsai} (ii) instead of \thref{kimtsai} (i).
\end{proof}

\begin{corollary}\thlabel{kimtsai2}
Let $\om$ be a bounded $C^1$-domain in $\rn  $. If $1<p<\infty$ and $1\le q\le \infty$, then
for each $f\in W^{-1,p,q}(\om)$, there is a unique weak solution $u\in W^{1,p,q}_0(\om)$ of \eqref{Poisson equation2} with $f$   in place of $\div F$. Moreover,
$$
\|u\|_{W^{1,p,q}(\om)} \le C \|f\|_{W^{-1,p,q}(\om)},
$$
where $C=C(n,\om,A,p,q)$.
\end{corollary}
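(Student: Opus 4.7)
The plan is to reduce the Lorentz-space statement to the classical Sobolev-space case $q=p$ and then apply the real-interpolation identities of \thref{interpolation0}.

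\textbf{Step 1 (classical case $q=p$).} First I would show that the operator $\mathcal{L}\colon u\mapsto -\div(ADu)$ is a bounded isomorphism from $W^{1,p}_0(\om)$ onto $W^{-1,p}(\om)$ for each $1<p<\infty$. Given $f\in W^{-1,p}(\om)=W^{1,p'}_0(\om)^*$, the Poincar\'e inequality shows that $\vp\mapsto D\vp$ is a bicontinuous embedding of $W^{1,p'}_0(\om)$ onto a closed subspace of $L^{p'}(\om;\rn)$. Hence $f$ induces a continuous linear functional on that subspace; by Hahn--Banach and the Riesz representation of $L^{p'}(\om;\rn)^*$, there exists $F\in L^p(\om;\rn)$ with $f=-\div F$ and $\|F\|_{p;\om}\le C\|f\|_{W^{-1,p}(\om)}$. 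Applying \thref{kimtsai} (i) with exponent pair $(p,p)$ yields a unique $u\in W^{1,p}_0(\om)$ with $\mathcal{L}u=f$ and $\|u\|_{W^{1,p}(\om)}\le C\|F\|_{p;\om}\le C\|f\|_{W^{-1,p}(\om)}$. Denote the resulting solution operator by $T_p$.

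\textbf{Step 2 (interpolation to general $q$).} Choose $1<p_1<p<p_2<\infty$ and $\theta\in(0,1)$ with $\frac{1}{p}=\frac{1-\theta}{p_1}+\frac{\theta}{p_2}$. By the uniqueness in Step 1, $T_{p_1}$ and $T_{p_2}$ are restrictions of a single linear operator $T$ defined on $W^{-1,p_1}(\om)+W^{-1,p_2}(\om)$. Real interpolation then gives that $T$ is bounded from $\left(W^{-1,p_1}(\om),W^{-1,p_2}(\om)\right)_{\theta,q}$ into $\left(W^{1,p_1}_0(\om),W^{1,p_2}_0(\om)\right)_{\theta,q}$, and by \thref{interpolation0} (i)--(ii) these spaces are exactly $W^{-1,p,q}(\om)$ and $W^{1,p,q}_0(\om)$. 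This produces existence together with the desired norm bound. That $u=T(f)$ actually solves the equation weakly is automatic: for fixed $\vp\in C_c^\infty(\om)$, both sides of $\int_\om A Du\cdot D\vp\,dx=\langle f,\vp\rangle$ depend continuously on $(u,f)\in W^{1,p_1}_0(\om)\times W^{-1,p_1}(\om)$, and the identity holds on the dense intersection provided by Step 1.

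\textbf{Step 3 (uniqueness).} If $u\in W^{1,p,q}_0(\om)$ is a weak solution of the homogeneous equation, then since $\om$ has finite measure the Lorentz embedding $L^{p,q}(\om)\hookrightarrow L^{p_1}(\om)$ for any $1<p_1<p$ gives $u\in W^{1,p_1}_0(\om)$, and the uniqueness assertion of \thref{kimtsai} (i) (applied with $F=0$) forces $u=0$. The only genuinely nontrivial ingredient is the surjectivity of $\div\colon L^p(\om;\rn)\to W^{-1,p}(\om)$ in Step 1, which is a standard Hahn--Banach argument; the rest of the argument is just bookkeeping with real interpolation applied to the solution operator already provided by \thref{kimtsai}.
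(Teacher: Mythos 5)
Your proof is correct and follows essentially the same route as the paper: reduce to $q=p$ by writing $f=\div F$ via the Hahn--Banach/Poincar\'e argument and applying \thref{kimtsai}~(i), then pass to general $q$ by real interpolation using \thref{interpolation0}, with uniqueness coming from the embedding $W_0^{1,p,q}(\om)\hookrightarrow W_0^{1,p_1}(\om)$ for $p_1<p$. The paper states all this in one line; your version simply fills in the standard details (the surjectivity of the divergence operator and the explicit uniqueness step), and the only small slip is that the identification of the interpolated spaces is all contained in \thref{interpolation0}~(i), not parts (i)--(ii).
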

\begin{proof}
For the special case when $1<p=q<\infty$, the desired $W_0^{1,p}$-result immediately follows from \thref{kimtsai} (i) since every $f\in W^{-1,p}(\om)$ can be written as $f= {\rm div}\, F$ for some $F\in L^p (\om ; \rn )$. Then the corollary in its full generality is easily deduced from the $W_0^{1,p}$-results by real interpolation based on  \thref{interpolation0}.
\end{proof}

The following density results will be used to prove the main theorems of  the paper later in Sections 4 and 5.

\begin{lem}\thlabel{dense}
Let $1<p<\infty$ and $1\le q<\infty$.
\begin{enumerate}[{\upshape(i)}]
\item Let $\om$ be any  domain in $\rn$. Then $C^\infty_c(\om)$ is dense in $L^{p,q}(\om)$.
\item Let $\om$ be a bounded Lipschitz domain in $\rn$. Then for each $u\in W^{1,p}_0(\om) \cap L^\infty(\om)$ there exists  a sequence $\{u_k\}$ in $C_c^\infty(\om)$ such that $u_k \rightarrow u$ in $W^{1,p}(\om)$ and $u_k \rightarrow u$ weakly-$*$ in $L^\infty(\om)$.
\item Let $\om$ be a bounded $C^1$-domain in $\rn  $. Then $C^\infty_c(\om)$ is dense in $W^{1,p,q}_0(\om)$.
\item Let $\om$ be a bounded $C^{1,1}$-domain in $\rn  $. Then for each $u \in W^{1,p,q}_0(\om) \cap W^{2,p,q}(\om )$ there exists a sequence $\{u_k\}$ in $C^{1,1}(\overline{\om})$ such that $ u_k  =0$ on $\partial\Omega$ and  $u_k \rightarrow u$ in $W^{2,p,q}(\om)$.
\end{enumerate}
\end{lem}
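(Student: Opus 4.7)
The overall strategy is to handle the four parts by successively more involved approximation arguments, with the common thread that translation is continuous in $L^{p,q}(\rn)$ when $q<\infty$. For (i), I would first observe that simple functions of the form $\sum_{i=1}^N a_i\chi_{E_i}$ with finite-measure $E_i$ are dense in $L^{p,q}(\om)$ by a direct estimate of the distribution function; by outer regularity of Lebesgue measure each $\chi_{E_i}$ is approximated in $L^{p,q}$ by a mollified cutoff of an open set containing $E_i$, which produces a $C_c^\infty(\om)$-approximant. Translation continuity in $L^{p,q}(\rn)$ for $q<\infty$ is then immediate from the density of $C_c(\rn)$, and will be used throughout (iii) and (iv).

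For (ii), using $W^{1,p}_0(\om)=\widehat W^{1,p}(\om)$, I pick $v_k\in C_c^\infty(\om)$ with $v_k\to u$ in $W^{1,p}(\om)$, truncate via $\tau_M(t)=\max(-M,\min(M,t))$ at level $M=\|u\|_{\infty;\om}$, and mollify diagonally: $u_k=\tau_M(v_k)\ast\rho_{\epsilon_k}$ with $\epsilon_k$ small enough to keep the support in $\om$. The chain rule for Sobolev functions gives $\tau_M(v_k)\to u$ in $W^{1,p}(\om)$ with $\|\tau_M(v_k)\|_{\infty;\om}\le M$, so $u_k\in C_c^\infty(\om)$ converges to $u$ in $W^{1,p}(\om)$ with the same uniform sup-norm bound; weak-$*$ convergence in $L^\infty(\om)$ is then automatic from $L^p$-convergence combined with the uniform bound.

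For (iii), I take a finite open cover $\{U_j\}_{j=0}^N$ of $\overline\om$ with $U_0\Subset\om$ and boundary charts $U_j$ ($j\ge 1$) in which $C^1$-diffeomorphisms straighten $\bom\cap U_j$ to a flat piece. With a subordinate partition of unity $\{\eta_j\}$, the interior piece $u\eta_0$ is approximated in $W^{1,p,q}(\om)$ by convolution with $\rho_\epsilon$, using \thref{conv} for the $L^{p,q}$-bound and the translation continuity from (i) for the convergence. Each boundary piece $u\eta_j$ is transferred to a half-ball, shifted inward by $\epsilon$ in the normal direction (which converges to the identity in $L^{p,q}$ by translation continuity), and then mollified with radius less than $\epsilon$ to obtain an approximant in $C_c^\infty(\om\cap U_j)$. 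Summing and passing to the limit gives the desired $C_c^\infty(\om)$-approximation in $W^{1,p,q}(\om)$.

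For (iv), I use the same cover but with $C^{1,1}$-charts, so the change-of-variables preserves $W^{2,p,q}$-regularity. After flattening, each piece has vanishing trace on the flat face, so its odd reflection $\tilde u(y',-y_n)=-\tilde u(y',y_n)$ lies in $W^{2,p,q}$ of the full ball: tangential first derivatives of $u$ vanish on the flat face because $u$ does, making their odd extensions consistent, while normal first derivatives get evenly extended without any jump. Convolving with a symmetric mollifier produces a $C^\infty$-function whose oddness is preserved, hence it vanishes on the flat face; transferring back through the $C^{1,1}$-chart gives a $C^{1,1}(\overline{\om\cap U_j})$-approximant (smooth composed with $C^{1,1}$ is $C^{1,1}$) vanishing on $\bom\cap U_j$, which glues via the partition of unity to the required sequence in $C^{1,1}(\overline\om)$ with zero boundary values. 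The main obstacle is verifying that the odd reflection yields a genuine $W^{2,p,q}$-function across the interface, which requires careful use of the vanishing trace of both $u$ and its tangential derivatives, and then bounding the composition with the straightening $C^{1,1}$-diffeomorphism on $W^{2,p,q}$, the latter following by real interpolation (\thref{interpolation}) from the classical $W^{2,p}$ change-of-variables bounds.
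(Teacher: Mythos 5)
Your argument is correct but takes a substantially more direct route than the paper's for parts (i), (iii), and (iv). The paper reduces all three to their classical Lebesgue and Sobolev counterparts via real interpolation and the standard density theorem of Bergh--L\"ofstr\"om: part (i) from \thref{itp-lorentz}, and parts (iii) and (iv) from \thref{interpolation0}, which itself rests on the elliptic solvability result \thref{kimtsai}; in particular, the odd-reflection construction in (iv) is carried out only at the $W^{2,p}$ level before interpolating. You instead prove (i) directly via simple functions, inner regularity, and mollification, and then run the flattening/shift/reflection/mollification argument for (iii) and (iv) directly at the $W^{1,p,q}$ and $W^{2,p,q}$ levels, invoking interpolation only for change-of-variables boundedness. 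This route is longer and requires you to establish translation- and mollification-continuity of $L^{p,q}$ for $q<\infty$ from scratch, but it avoids the interpolation identity for $W^{1,p,q}_0$ and the elliptic machinery underlying it, so it is more self-contained. Your (ii), truncating $C_c^\infty$-approximants by $\tau_M$ and then mollifying, is a clean variant of the paper's boundary-cutoff construction $u_k(x)=u(x)(1-\zeta(kx_n))$ after flattening; note, though, that the convergence $\tau_M(v_k)\to u$ in $W^{1,p}$ is not a direct consequence of the chain rule alone and needs an a.e.\ convergence plus dominated-convergence argument (here using that $Du=0$ a.e.\ on $\{|u|=M\}$). One order-of-operations detail worth fixing in (iii): pulling a mollified smooth function back through a $C^1$-chart gives only a $C^1$-function, so you should mollify after transferring back to $\Omega$, or append one final mollification, to actually land in $C_c^\infty(\Omega)$.
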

\begin{proof}
(i) Since $C_c^\infty(\om)$ is dense in $L^{r}(\om)$ for $1<r<\infty$, it follows from \thref{itp-lorentz} and a standard density theorem (see, e.g., \cite[Theorem 3.4.2]{bergh}) that $C^\infty_c(\om)$ is dense in $L^{p,q}(\om)$.

(ii) We closely follow the proof of \cite[Theorem 5.5.2]{evans}.
Using partitions of unity and flattening out $\partial \om$, we may assume that $u\in W^{1,p}_0(\rn_+)\cap L^\infty(\rn_+)$ and $u$ has compact support in $\overline {\rn_+}$.
Then since $\tr u=0$ on $\partial \rn_+=\mathbb{R}^{n-1}$, we have
\begin{equation}\label{dense-1}
\int_{\mathbb{R}^{n-1}} |u(x',x_n)|^p dx' \le x_n^{p-1} \int_0^{x_n}\int_{\mathbb{R}^{n-1}}|D_n u(x',t)|^p dx'dt
\end{equation}
for all $x_n\ge0$.
Choose a function $\zeta \in C_c^\infty(\mathbb{R})$ such that $0\le \zeta\le1$, $\zeta(t)=0$ for $|t|\ge4$, and $\zeta=1$ on $[0,2]$.
Define $u_k(x)= u(x) \left( 1-\zeta(kx_n) \right)$ on $\rn_+$.
Then it easily follows from \eqref{dense-1} that $u_k \rightarrow u$ in $W^{1,p}(\rn_+)$.
Observe that $u_k=0$ on $\{0<x_n<2/k\}$, $|u_k| \le |u|$, and $u\in L^\infty(\rn_+)$.
Hence if $\rho$ is a standard mollifier and $\rho_k(x)=k^n \rho(kx)$, then $u_k \ast \rho_k \rightarrow u$ in $W^{1,p}(\rn_+)$ and $u_k \ast \rho_k \rightarrow u$ weakly-$*$ in $L^\infty(\rn_+)$.

(iii) Since $C_c^\infty(\om)$ is dense in $W^{1,r}_0(\om)$ for $1<r<\infty$, it follows from \thref{interpolation0} and   \cite[Theorem 3.4.2]{bergh}  that $C^\infty_c(\om)$ is dense in $W^{1,p,q}_0(\om)$.

(iv) By \thref{interpolation0}, it suffices to show that $\left\{ u\in C^{1,1}(\overline{\om}): u =0 \,\,\mbox{on}\,\,\partial\Omega \right\}$ is dense in $W^{1,p}_0(\om) \cap W^{2,p}(\om )$. Assume that $u\in W^{1,p}_0(\rn_+)\cap  W^{2,p}(\rn_+)$ and $u$ has compact support in $\overline {\rn_+}$.
Let $\overline{u}$ be the extension of $u$ to $\rn$ by odd refection:
\[
\overline{u}(x' , x_n ) = \left\{
\begin{aligned}
u(x' , x_n ) \quad \text{if } x_n \ge 0, \\
-u (x', x_n )  \quad \text{if } x_n < 0 .
\end{aligned}
\right.
\]
Then  $\overline{u}\in  W^{2,p}(\rn )$ and ${\rm Tr} \, u=0$ on $\partial \rn_{+}$. Now, if $u_k$ is the restriction of $\overline{u} \ast \rho_k$ to $\rn_{+}$, then $u_k \in C^\infty (\overline {\rn_+} )$, $u_k =0$ on $\partial\rn_{+}$,  and $u_k \to u$ in $W^{2,p}(\rn_+)$. The proof is complete by  using partitions of unity.
\end{proof}

\section{Gerhardt-type estimates in  Lorentz spaces}
Throughout this section, we assume that $\om $ is a bounded Lipschitz domain in $\rn $, where $n \ge 2$. Then we shall prove several   estimates of Gerhardt-type \cite{gerhardt} that will be used crucially to derive   a priori estimates for solutions of the problems \eqref{Eq1}, \eqref{Eq3},  and \eqref{Eq2}.

First we state Gagliardo-Nirenberg inequalities in Sobolev-Lorentz spaces.

\begin{thm}\thlabel{GN}
Let $k,k_1,k_2\in \mathbb{N}$, $1<p,p_1,p_2<\infty$, $1\le q,q_1,q_2\le\infty$, and $0<\theta<1$ satisfy
$$
(1-\theta)k_1 + \theta k_2 -k \ge n\left(\frac{1-\theta}{p_1} + \frac{\theta}{p_2} -\frac{1}{p}\right) \ge 0 \quad \text{and} \quad \frac{1}{q}\le \frac{1-\theta}{q_1}+\frac{\theta}{q_2}.
$$
Then for all $u\in W^{k_1,p_1,q_1}(\om) \cap W^{k_2,p_2,q_2}(\om)$,
$$
\|u\|_{W^{k,p,q}(\om)} \le C \|u\|_{W^{k_1,p_1,q_1}(\om)}^{1-\theta}\|u\|_{W^{k_2,p_2,q_2}(\om)}^\theta,
$$
where $C=C(n,\om,k,k_1,k_2,p,p_1,p_2,q,q_1,q_2,\theta)$.
\end{thm}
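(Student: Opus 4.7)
The plan is to deduce the inequality from the Sobolev--Lorentz embedding (\thref{ebd}) combined with the general H\"older inequality in Lorentz spaces (\thref{holder}), after reducing to the scale-invariant case.

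First, since $\om$ is a bounded Lipschitz domain, standard Sobolev--Lorentz inclusions allow us to replace the triples $(k_j, p_j, q_j)$ or $(k, p, q)$ by slightly weaker ones, absorbing the loss into the constant. This reduces the problem to the sharp case where the outer inequality in the hypothesis is an equality, namely $(1-\theta)k_1 + \theta k_2 - k = n(\tfrac{1-\theta}{p_1} + \tfrac{\theta}{p_2} - \tfrac{1}{p})$, and it also lets us assume $\tfrac{1}{q} = \tfrac{1-\theta}{q_1} + \tfrac{\theta}{q_2}$ by the same inclusion principle applied to the secondary index.

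For the conceptual heart, I would first treat $k = 0$: choose $s_j \in (1, \infty]$ with $\tfrac{1}{s_j} = \tfrac{1}{p_j} - \tfrac{k_j}{n}$. Iterating \thref{ebd}(i) yields the embedding $W^{k_j, p_j, q_j}(\om) \hookrightarrow L^{s_j, q_j}(\om)$, with the endpoint $s_j = \infty$ handled via \thref{ebd}(ii) together with \thref{conv}(i). The sharp scale-invariance then forces $\tfrac{1}{p} = \tfrac{1-\theta}{s_1} + \tfrac{\theta}{s_2}$. The algebraic trick is to split $|u| = |u|^{1-\theta} \cdot |u|^{\theta}$ and to apply the scaling identity \eqref{lorentz-exp} together with \thref{holder} at H\"older exponents $(a_1, a_2) = (s_1/(1-\theta), s_2/\theta)$ and secondary exponents $(b_1, b_2) = (q_1/(1-\theta), q_2/\theta)$:
$$
\|u\|_{p, q; \om} = \| \,|u|^{1-\theta}\, |u|^{\theta} \|_{p, q; \om} \le C \, \|u\|_{s_1, q_1; \om}^{1-\theta} \|u\|_{s_2, q_2; \om}^{\theta} \le C \, \|u\|_{W^{k_1, p_1, q_1}(\om)}^{1-\theta} \|u\|_{W^{k_2, p_2, q_2}(\om)}^{\theta}.
$$
The primary condition $\tfrac{1}{p} = \tfrac{1}{a_1} + \tfrac{1}{a_2}$ and the secondary condition $\tfrac{1}{q} \le \tfrac{1}{b_1} + \tfrac{1}{b_2}$ in \thref{holder} are then precisely the hypotheses of the theorem. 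For $k \ge 1$, I would apply the $k = 0$ bound to each derivative $D^\alpha u$ with $|\alpha| = k$, decomposing $\alpha = \beta_1 + \beta_2$ with $0 \le |\beta_j| \le k_j$ so that $D^{\beta_j} u \in W^{k_j - |\beta_j|, p_j, q_j}(\om)$; existence of such a splitting follows from the reduction $k \le (1-\theta) k_1 + \theta k_2$.

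The main obstacle is the multiplicative, nonlinear structure of the Gagliardo--Nirenberg bound: a product of two different norms of the same function is not directly accessible through the linear or sublinear real-interpolation machinery of \thref{interpolation}. The resolution is the multiplicative splitting $|u| = |u|^{1-\theta}|u|^{\theta}$, which converts the nonlinear estimate into a single application of H\"older in the Lorentz scale; the hypotheses on the indices $p, q, p_j, q_j$ are designed exactly to meet the constraints imposed by \thref{holder} once the Sobolev--Lorentz embedding has brought us into the pure Lorentz setting.
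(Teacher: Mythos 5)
Your approach is genuinely different from the paper's, which does essentially no work here: it cites \cite[Theorem~5.9]{itp} for the case $\om = \rn$ and transfers the estimate to a bounded Lipschitz domain via the extension operator of \thref{extension}. Your attempt to prove the inequality from scratch via Sobolev embedding and H\"older in Lorentz spaces is a reasonable instinct for the $k=0$ case, although even there the iteration of \thref{ebd}(i) needs care when the chain of conjugate exponents crosses or reaches $n$ (for $k_j p_j = n$ with $q_j > 1$ one does not land in $L^\infty$, and \thref{ebd} as stated does not cover $p > n$), and the reduction to the ``sharp'' case must keep the auxiliary exponents inside the admissible range.

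The genuine gap is the passage from $k=0$ to $k \ge 1$. You propose to bound $\|D^\alpha u\|_{p,q}$ for $|\alpha| = k$ by decomposing $\alpha = \beta_1 + \beta_2$ with $|\beta_j| \le k_j$ and applying the $k=0$ bound; but the $k=0$ inequality is an estimate on a \emph{single} function, and $D^{\beta_1} u$, $D^{\beta_2} u$ are two different functions, so there is no well-defined object to feed into it. More fundamentally, a Gagliardo--Nirenberg inequality involving intermediate derivatives does not follow from H\"older plus embedding. Take $n=1$, $k=1$, $k_1 = 0$, $k_2 = 2$, $\theta = 1/2$, $p = p_1 = p_2 = q = q_1 = q_2 = 2$: the claimed inequality is $\|u'\|_{L^2} \le C \|u\|_{L^2}^{1/2}\|u\|_{W^{2,2}}^{1/2}$. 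Here the only admissible decomposition of $\alpha$ with $|\beta_1| \le k_1 = 0$ forces $\beta_1 = 0$ and $|\beta_2| = 1$, so $D^{\beta_2} u = u'$ is exactly the quantity you are trying to estimate and the argument is circular. Moreover $D^\alpha u = u'$ has only $W^{-1,2}$ regularity from the $k_1$-side, so there is no $L^{s_1,q_1}$ norm of $u'$ to plug into the multiplicative split $|u'| = |u'|^{1-\theta}|u'|^\theta$. Proving this single-function, derivative-interpolation inequality requires a different mechanism (classically: one-dimensional integral interpolation plus Fubini, or Littlewood--Paley / Besov-space methods), which is exactly the content that the cited reference \cite{itp} supplies and your proposal does not.
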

\begin{proof}
It was shown in \cite[Theorem 5.9]{itp} that the theorem holds when $\om=\rn$.
If $\om$ is a bounded Lipschitz domain in $\rn$, then
\begin{align*}
\|u\|_{W^{k,p,q}(\om)} &\le C \|Eu\|_{W^{k,p,q}(\rn)} \\ &\le C \|Eu\|_{W^{k_1,p_1,q_1}(\rn)}^{1-\theta}\|Eu\|_{W^{k_2,p_2,q_2}(\rn)}^\theta\\
	& \le C \|u\|_{W^{k_1,p_1,q_1}(\om)}^{1-\theta}\|u\|_{W^{k_2,p_2,q_2}(\om)}^\theta
\end{align*}
for all $u\in W^{k_1,p_1,q_1}(\om) \cap W^{k_2,p_2,q_2}(\om)$, where $E$ is the extension operator in \thref{extension}.
\end{proof}

\begin{corollary}\thlabel{GN2}
Let $1<p<\infty$ and $1\le q\le\infty$. Then for any $\varepsilon >0$, there is a constant $C_\epsilon=C(n,\om, p,q,\epsilon)$ such that
\[
\|u\|_{W^{1,p,q}(\om)} \le \varepsilon \|D^2u\|_{p,q;\om}+C_\epsilon \|u\|_{p,q;\om}
\]
for all $u\in W^{2,p,q}(\om)$.
\end{corollary}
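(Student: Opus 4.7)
The plan is to derive this as an Ehrling-style consequence of the Gagliardo--Nirenberg inequality just established in Theorem \thref{GN}. Since $\|u\|_{W^{1,p,q}(\om)}$ already contains $\|u\|_{p,q;\om}$ as one of its summands, only the $\|Du\|_{p,q;\om}$ piece actually requires interpolation.

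First I will apply Theorem \thref{GN} with $k=1$, $k_1=2$, $k_2=0$, $p_1=p_2=p$, $q_1=q_2=q$, and $\theta=1/2$; with these choices the two admissibility conditions $(1-\theta)k_1+\theta k_2-k\ge n\bigl((1-\theta)/p_1+\theta/p_2-1/p\bigr)\ge 0$ and $1/q\le(1-\theta)/q_1+\theta/q_2$ collapse to $0\ge 0$ and $1/q\le 1/q$, both trivial. This produces the interpolation inequality
$$
\|u\|_{W^{1,p,q}(\om)} \le C\,\|u\|_{W^{2,p,q}(\om)}^{1/2}\,\|u\|_{p,q;\om}^{1/2}.
$$
Should the convention $0\in\mathbb{N}$ be unavailable in Theorem \thref{GN}, I would instead apply the analogous Gagliardo--Nirenberg inequality on $\rn$ to an extension $Eu$ provided by Lemma \thref{extension}; this recovers exactly the same bound without invoking a zero differentiability order.

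Next, using the trivial estimate $\|u\|_{W^{2,p,q}(\om)} \le \|u\|_{W^{1,p,q}(\om)} + \|D^2 u\|_{p,q;\om}$ on the right, I will apply Young's inequality $ab\le \delta a^2 + b^2/(4\delta)$ with $a=\bigl(\|u\|_{W^{1,p,q}(\om)}+\|D^2 u\|_{p,q;\om}\bigr)^{1/2}$ and $b$ proportional to $\|u\|_{p,q;\om}^{1/2}$. The term $\delta\|u\|_{W^{1,p,q}(\om)}$ that appears on the right is then absorbed into the left-hand side; for any prescribed $\varepsilon>0$, picking $\delta$ sufficiently small produces the coefficient $\varepsilon$ in front of $\|D^2u\|_{p,q;\om}$, with $C_\varepsilon$ determined by the remaining constant $C^2/(4\delta(1-\delta))$ and a factor $1$ for the original $\|u\|_{p,q;\om}$.

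I do not foresee a genuine obstacle: the mechanics are entirely standard once Theorem \thref{GN} is in hand, and the only points requiring some care are the verification of the admissibility of the interpolation exponents and (if needed) the minor workaround via the extension operator to avoid $k_2=0$.
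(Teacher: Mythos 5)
Your proposal is correct and is essentially the paper's own argument: both start from the Gagliardo--Nirenberg bound $\|u\|_{W^{1,p,q}(\om)} \le C\|u\|_{W^{2,p,q}(\om)}^{1/2}\|u\|_{p,q;\om}^{1/2}$ of \thref{GN} and then use Young's inequality plus absorption of the first-order term into the left-hand side. The only cosmetic difference is the bookkeeping: the paper splits $\|u\|_{W^{2,p,q}(\om)}^{1/2}$ into three square-root terms and absorbs $\tfrac12\|Du\|_{p,q;\om}$, whereas you keep $\|u\|_{W^{1,p,q}(\om)}$ intact and absorb $\delta\|u\|_{W^{1,p,q}(\om)}$; your remark about the $k_2=0$ convention and the extension-operator fallback is a reasonable precaution, and the paper implicitly relies on the same $k_2=0$ case.
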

\begin{proof}
Suppose that $u\in W^{2,p,q}(\om)$. Let $\epsilon >0$ be given. Then by  \thref{GN}, we have
\begin{align*}
\|u\|_{W^{1,p,q}(\om)} &\le C \|u\|_{W^{2,p,q}(\om)}^{1/2}\|u\|_{p,q;\om}^{1/2}\\
	& \le C\|D^2u\|_{p,q;\om}^{1/2}\|u\|_{p,q;\om}^{1/2} + C\|Du\|_{p,q;\om}^{1/2}\|u\|_{p,q;\om}^{1/2}  + C\|u\|_{p,q;\om}\\
	&\le\epsilon\|D^2u\|_{p,q;\om} + \frac{1}{2} \|Du\|_{p,q;\om} +C_\varepsilon \|u\|_{p,q;\om},
\end{align*}
which completes the proof.
\end{proof}

\begin{lem}\thlabel{estimate1}
Assume that $b\in L^{n,1}(\om;\rn)$ and $c\in L^{n,1}(\om)$. Then
for any $\epsilon>0$, there is a constant $C_\epsilon=C(n,\om,b,\|c\|_{n,1;\om}, \epsilon)$ such that
\begin{equation*}
 \| b \cdot Du \|_{n,1;\om} + \|c u\|_{n,1;\om} \le \epsilon \| D^2 u \|_{n,1;\om} + C_\epsilon \| u \| _{n,1;\om}
\end{equation*}
for all $u\in W^{2,n,1}(\om)$.
\end{lem}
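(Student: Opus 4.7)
The plan is to handle $\|cu\|_{n,1;\om}$ directly and to treat $\|b\cdot Du\|_{n,1;\om}$ via a Gerhardt-type splitting $b=b_1+b_2$ in which $\|b_1\|_{n,1;\om}$ is made arbitrarily small while $b_2$ is bounded. In both cases the Sobolev embedding $W^{1,n,1}(\om)\hookrightarrow C(\overline\om)$ from \thref{ebd}(ii), combined with the interpolation inequality \thref{GN2}, is the tool that converts $\|u\|_{\infty;\om}$ or $\|Du\|_{\infty;\om}$ into a small multiple of $\|D^2 u\|_{n,1;\om}$ plus a (possibly large) multiple of $\|u\|_{n,1;\om}$.

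For the $c$-term, I would first apply \thref{conv}(i) to obtain $\|cu\|_{n,1;\om}\le C\|c\|_{n,1;\om}\|u\|_{\infty;\om}$. Then \thref{ebd}(ii) gives $\|u\|_{\infty;\om}\le C\|u\|_{W^{1,n,1}(\om)}$, and \thref{GN2} yields $\|u\|_{W^{1,n,1}(\om)}\le\delta\|D^2u\|_{n,1;\om}+C_\delta\|u\|_{n,1;\om}$ for any $\delta>0$. Given $\epsilon>0$, choosing $\delta$ so that $C\|c\|_{n,1;\om}\delta<\epsilon/2$ produces the desired bound, with a constant depending only on $n,\om,\|c\|_{n,1;\om}$ and $\epsilon$, as the statement requires.

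For the $b$-term I use the density of $C_c^\infty(\om)$ in $L^{n,1}(\om)$ from \thref{dense}(i) (valid because the second Lorentz index equals $1<\infty$) to write $b=b_1+b_2$ with $\|b_1\|_{n,1;\om}<\eta$ and $b_2\in L^\infty(\om;\rn)$, for an $\eta>0$ to be selected. The bounded part is estimated by \thref{conv}(i) applied componentwise, giving $\|b_2\cdot Du\|_{n,1;\om}\le C\|b_2\|_{\infty;\om}\|Du\|_{n,1;\om}$. The small part is bounded by \thref{conv}(i) in the roles of $b_1$ and $Du$, followed by $W^{1,n,1}\hookrightarrow C(\overline\om)$ applied componentwise to $Du\in W^{1,n,1}(\om;\rn)$:
$$
\|b_1\cdot Du\|_{n,1;\om}\le C\|b_1\|_{n,1;\om}\|Du\|_{\infty;\om}\le C\eta\bigl(\|Du\|_{n,1;\om}+\|D^2u\|_{n,1;\om}\bigr).
$$
Combining the two pieces and invoking \thref{GN2} once more on $\|Du\|_{n,1;\om}$ produces a bound in which the coefficient of $\|D^2u\|_{n,1;\om}$ takes the form $C\eta+C(\eta+\|b_2\|_{\infty;\om})\delta'$. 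Given $\epsilon>0$, I would first fix $\eta$ so that $C\eta<\epsilon/4$; this determines $\|b_2\|_{\infty;\om}$, after which $\delta'$ is chosen so that $C(\eta+\|b_2\|_{\infty;\om})\delta'<\epsilon/4$. Summing with the $c$-estimate gives the lemma.

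The only real subtlety is the bookkeeping of constants: the order in which $\eta$ and $\delta'$ are selected matters, because $\|b_2\|_{\infty;\om}$ generally blows up as $\eta\to0$. This is exactly why the final constant is permitted to depend on the \emph{function} $b$, not merely on $\|b\|_{n,1;\om}$, while the $c$-contribution requires no splitting and hence only $\|c\|_{n,1;\om}$ enters $C_\epsilon$, matching the asserted dependence $C_\epsilon=C(n,\om,b,\|c\|_{n,1;\om},\epsilon)$.
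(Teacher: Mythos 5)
Your proof is correct and follows essentially the same route as the paper's: both decompose $b$ (via density of $C_c^\infty(\om)$ in $L^{n,1}(\om)$) into a small-norm piece, handled through \thref{conv} together with the embedding $W^{1,n,1}(\om)\hookrightarrow C(\overline\om)$, and a bounded piece, then absorb the remaining $\|Du\|_{n,1;\om}$ with \thref{GN2}, and both treat the $c$-term directly by \thref{conv}, \thref{ebd}, and \thref{GN2}. Your remark on the order of choosing $\eta$ and $\delta'$ and on why $C_\epsilon$ depends on the function $b$ but only on $\|c\|_{n,1;\om}$ matches the paper's stated dependence exactly.
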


\begin{proof} By Lemma  \ref{dense}, there is a sequence  $\{b_k \}$ in $C_c^{\infty}(\om;\rn)$ such that $b_k\rightarrow b$ in $L^{n,1}(\om;\rn)$. By Lemmas \ref{conv} and \ref{ebd}, we have
\begin{align*}
\|b \cdot  Du\|_{n,1;\om} &= \|(b -b_{k} )\cdot Du+b_{k}\cdot Du\|_{n,1;\om} \nonumber\\
 &\le C\|b-b_k\|_{n,1;\om}\|Du\|_{\infty;\om} + C\|b_k\|_{\infty;\om} \|Du\|_{n,1;\om} \nonumber\\
 &\le C\|b-b_k\|_{n,1;\om}\|Du\|_{W^{1,n,1}(\om)} + C\|b_k\|_{\infty;\om} \|Du\|_{n,1;\om} \nonumber\\
 &\le C_* \|b-b_k\|_{n,1;\om}\|D^2u\|_{n,1;\om}+ C(\|b-b_k\|_{n,1;\om}+\|b_k\|_{\infty;\om})\|Du\|_{n,1;\om} ,
 \end{align*}
where $C_* =C_* (n,\om)$. Now, given $\epsilon>0$, we choose a sufficiently large $k$ such that
$$
C_* \|b-b_k\|_{n,1;\om}\le\frac{\epsilon}{4}.
$$
Then by \thref{GN2}, there is a constant $C_\epsilon=C(n,\om, b,\epsilon)$ such that
\begin{equation*}
\|b \cdot  Du\|_{n,1;\om} \le \frac{\epsilon}{2}\|D^2u\|_{n,1;\om} + C_\epsilon \|u\|_{n,1;\om}.
\end{equation*}
Similarly, by Lemmas \ref{conv}, \ref{ebd}, and \thref{GN2},
\begin{align*}
\|cu\|_{n,1;\om} &\le C \|c\|_{n,1;\om} \| u\|_{W^{1,n,1}(\om)} \\
& \le \frac{\epsilon}{2}\|D^2  u\|_{n,1;\om} + C_\epsilon \|u\|_{n,1;\om} ,
\end{align*}
where $C_\epsilon=C(n,\om, \|c\|_{n,1;\om},\epsilon)$.
This completes the proof of the lemma.
\end{proof}

 \begin{lem}\thlabel{estimate2} Assume that $b\in L^{n,1}(\om;\rn)$ and
$c\in L^{\frac{n}{2},1}(\om) \cap L^s(\om)$ for some $1<s<\frac{3}{2}$. Then for any $\epsilon>0$, there is a constant $C_\epsilon = C(n,\om, b,c,\epsilon)$ such that the following inequalities hold:
\begin{enumerate}[{\upshape (i)}]
\item For all $u \in W^{1,n,1}(\om)$,
$$
\|ub\|_{n,1;\om} + \|cu\|_{W^{-1,n,1}(\om)} \le \epsilon \|Du\|_{n,1;\om} + C_\epsilon \|u\|_{n,1;\om}.
$$

\item For all $v\in W^{1,\frac{n}{n-1},\infty}(\om)$,
$$\|b \cdot Dv\|_{W^{-1,\frac{n}{n-1},\infty}(\om)} + \|cv\|_{W^{-1,\frac{n}{n-1},\infty}(\om)} \le \epsilon \|Dv\|_{\frac{n}{n-1},\infty;\om} + C_\epsilon \|v\|_{\frac{n}{n-1},\infty;\om}.$$
\end{enumerate}
 \end{lem}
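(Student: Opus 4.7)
Both inequalities will be proved by the same approximation scheme used in \thref{estimate1}, adapted to the two different dualities involved. The idea is to approximate $b$ in $L^{n,1}(\om;\rn)$ and $c$ in $L^{n/2,1}(\om) \cap L^s(\om)$ by smooth compactly supported functions $b_k$ and $c_k$ (possible by \thref{dense}~(i), since the second Lorentz exponents are finite), and then split each norm as a ``rough'' piece whose operator norm can be forced below $\epsilon$ plus a ``smooth'' piece contributing the constant $C_\epsilon$. The main analytic tools are the generalized H\"older inequality (\thref{holder}), the Sobolev-Lorentz embeddings (\thref{ebd}), and the duality identifications $W^{-1,n,1}(\om) = \widehat{W}^{1,n/(n-1),\infty}(\om)^*$ and $W^{-1,n/(n-1),\infty}(\om) = \widehat{W}^{1,n,1}(\om)^*$.

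\textbf{Carrying out (i).} Since $u \in W^{1,n,1}(\om) \hookrightarrow L^\infty(\om)$ by \thref{ebd}~(ii), the bound on $\|ub\|_{n,1;\om}$ follows the proof of \thref{estimate1} almost verbatim: write $ub = u(b-b_k) + u b_k$, apply \thref{conv}~(i) with $\|u\|_\infty \le C\|u\|_{W^{1,n,1}}$, and choose $k$ large so that $C\|b-b_k\|_{n,1;\om} < \epsilon$, which absorbs $\|Du\|_{n,1;\om}$ into the small factor. For $\|cu\|_{W^{-1,n,1}(\om)}$, test against $\varphi \in \widehat{W}^{1,n/(n-1),\infty}(\om)$: when $n \ge 3$, \thref{ebd}~(i) places $\varphi \in L^{n/(n-2),\infty}(\om)$, and \thref{holder} yields $|\int (c-c_k) u\varphi \, dx| \le C\|c-c_k\|_{n/2,1;\om} \|u\|_\infty \|\varphi\|_{n/(n-2),\infty;\om}$; when $n = 2$, the space $L^{n/2,1} = L^1$ is too weak, and we instead invoke the auxiliary hypothesis $c \in L^s$ together with the embedding chain $W^{1,2,\infty}(\om) \hookrightarrow W^{1,r}(\om) \hookrightarrow L^{s'}(\om)$ for $r = 2s'/(s'+2) \in (1,2)$. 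The smooth piece $\|c_k u\|_{W^{-1,n,1}}$ is controlled by the trivial embedding $L^{n,1}(\om) \hookrightarrow W^{-1,n,1}(\om)$ and $\|c_k u\|_{n,1;\om} \le \|c_k\|_{\infty;\om}\|u\|_{n,1;\om}$.

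\textbf{Carrying out (ii) and main obstacle.} Part (ii) pairs with $\varphi \in \widehat{W}^{1,n,1}(\om) \hookrightarrow L^\infty(\om)$, which is what makes the estimates uniform in the dimension. For $\|b \cdot Dv\|_{W^{-1,n/(n-1),\infty}(\om)}$, \thref{holder} gives $|\int (b-b_k)\cdot Dv\,\varphi\,dx| \le C\|b-b_k\|_{n,1;\om}\|\varphi\|_{\infty;\om}\|Dv\|_{n/(n-1),\infty;\om}$, which, after fixing $k$, contributes $\epsilon\|Dv\|_{n/(n-1),\infty;\om}$; the smooth part is treated distributionally via $b_k \cdot Dv = \div(b_k v) - v\,\div b_k$, and both summands lie in $L^{n/(n-1),\infty}(\om) \hookrightarrow W^{-1,n/(n-1),\infty}(\om)$ with norm $\le C_{b_k}\|v\|_{n/(n-1),\infty;\om}$. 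The $cv$ term is handled as in (i), with $v \in L^{n/(n-2),\infty}$ (for $n\ge3$, by \thref{ebd}~(i)) or $v \in L^{s'}$ (for $n=2$, by the same intermediate embedding) providing the product estimate, while $\varphi \in L^\infty$ enters the H\"older chain freely. The genuinely delicate point is the endpoint $n = 2$ in the $c$-terms: because $L^{n/2,1}$ collapses to $L^1$, the auxiliary hypothesis $c \in L^s$ with $1 < s < 3/2$ is forced, and the specific restriction $s < 3/2$ (equivalently $s' > 3$) is exactly what the embedding chain $W^{1,2,\infty}(\om) \hookrightarrow W^{1,r}(\om) \hookrightarrow L^{s'}(\om)$ needs via an intermediate $r \in (1,2)$.
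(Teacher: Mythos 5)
Your proposal follows essentially the same route as the paper: approximate $b$ and $c$ in $L^{n,1}(\om;\rn)$ and $L^{n/2,1}(\om)\cap L^s(\om)$ by smooth compactly supported $b_k, c_k$ (via \thref{dense}~(i)), split each quantity into a rough part made small by taking $k$ large and a smooth part absorbed into $C_\epsilon$, estimate by testing against $\vp$ in the predual with \thref{holder} and \thref{ebd}, and, for (ii), use the integration-by-parts decomposition $b_k\cdot Dv = \div(b_k v) - v\,\div b_k$. This is exactly the paper's argument, including the dimensional split at $n=2$.

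However, your closing claim that $s<3/2$ (equivalently $s'>3$) is \emph{exactly} what the embedding chain $W^{1,2,\infty}(\om) \hookrightarrow W^{1,r}(\om) \hookrightarrow L^{s'}(\om)$ requires is incorrect. Since $\om$ is bounded, $L^{2,\infty}(\om) \hookrightarrow L^r(\om)$ for every $r<2$, so $W^{1,2,\infty}(\om) \hookrightarrow W^{1,r}(\om) \hookrightarrow L^{2r/(2-r)}(\om)$ with $2r/(2-r)\to\infty$ as $r\to 2^-$; hence $W^{1,2,\infty}(\om) \hookrightarrow L^q(\om)$ for every finite $q$, and the chain works for \emph{any} $s>1$. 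Your specific choice $r=2s'/(s'+2)$ needs only $r>1$, i.e.\ $s'>2$, i.e.\ $s<2$, which is still strictly weaker than $s<3/2$. The condition $s<3/2$ is not forced anywhere in this lemma; it appears in the theorem's hypotheses for consistency with the cited $W^{1,p}$-solvability results. This mislabeling does not affect the validity of the estimate you prove, but it misattributes the source of the hypothesis.
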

 \begin{proof}
Choose $b_k$ in $C_c^\infty(\om;\rn)$ and $c_k$ in $C_c^\infty(\om)$ such that $b_k \rightarrow b$ in $L^{n,1}(\om;\rn)$ and $c_k \rightarrow c$ in $L^{\frac{n}{2},1}(\om)\cap L^s(\om)$.
Let $\vp \in C_c^\infty(\om)$ be given.

(i) Suppose that  $u \in W^{1,n,1}(\om)$. Then by Lemmas \ref{conv} and \ref{ebd},
\begin{align*}
\|ub\|_{n,1;\om} &\le C  \|b-b_k\|_{n,1;\om} \|u\|_{\infty;\om} + C\|b_k\|_{\infty;\om}\|u\|_{n,1;\om} \\
 	& \le C\|b-b_k\|_{n,1;\om} \|Du\|_{n,1;\om} + C\left( \|b-b_k\|_{n,1;\om} + \|b_k\|_{\infty;\om}\right) \|u\|_{n,1;\om}.
\end{align*}
Moreover, if $n\ge3$, then
\begin{align*}
\left| \langle cu, \vp \rangle \right| &\le C  \|c-c_k\|_{\frac{n}{2},1;\om} \|u\|_{\infty;\om} \|\vp\|_{\frac{n}{n-2},\infty;\om}+ C\|c_k\|_{\infty;\om}\|u\|_{n,1;\om} \|\vp\|_{\frac{n}{n-1},\infty;\om} \\
 	& \le C\|c-c_k\|_{\frac{n}{2},1;\om} \|Du\|_{n,1;\om} \|\vp\|_{W^{1,\frac{n}{n-1},\infty}(\om)}\\
		& \qquad\qquad\qquad+ C\left( \|c-c_k\|_{\frac{n}{2},1;\om} + \|c_k\|_{\infty;\om}\right) \|u\|_{n,1;\om} \|\vp\|_{W^{1,\frac{n}{n-1},\infty}(\om)} .
\end{align*}
If $n=2$, then
\begin{align*}
\left| \langle cu, \vp \rangle \right| &\le C  \|c-c_k\|_{s;\om} \|u\|_{\infty;\om} \|\vp\|_{\frac{s}{s-1};\om}+ C\|c_k\|_{\infty;\om}\|u\|_{2,1;\om} \|\vp\|_{2,\infty;\om} \\
 	& \le C\|c-c_k\|_{s;\om} \|Du\|_{2,1;\om} \|\vp\|_{W^{1,2,\infty}(\om)}\\
 &\qquad + C\left( \|c-c_k\|_{s;\om} + \|c_k\|_{\infty;\om}\right) \|u\|_{2,1;\om} \|\vp\|_{W^{1,2,\infty}(\om)} .
\end{align*}
Hence, given $\epsilon>0$, we obtain the desired inequality by choosing $k$ sufficiently large so that
$$
C \|b-b_k\|_{n,1;\om} + C\|c-c_k\|_{\frac{n}{2},1;\om} + C\|c-c_k\|_{s;\om}\le \frac{\epsilon}{2}.
$$

\medskip
(ii) Suppose that  $v\in W^{1,\frac{n}{n-1},\infty}(\om)$. Then the integration by parts gives
\begin{align*}
\int_\om \vp b \cdot Dv \,dx  & = \int_\om \vp (b-b_k) \cdot Dv \,dx + \int_\om \vp b_k \cdot Dv  \,dx \\
	& = \int_\om \vp (b-b_k) \cdot Dv \,dx -\int_\om v \vp \,\div b_k \,dx   - \int_\om v b_k \cdot D\vp \,dx.
\end{align*}
Thus by Lemmas \ref{holder} and \ref{ebd},
\begin{align*}
\left| \langle b \cdot Dv , \vp \rangle \right| & \le  C\|b-b_k\|_{n,1;\om} \|Dv\|_{\frac{n}{n-1},\infty;\om} \|\vp\|_{W^{1,n,1}(\om)} \\
	& \qquad \qquad + C \left( \|\div b_k\|_{\infty;\om}  + \|b_k\|_{\infty;\om}\right) \|v\|_{\frac{n}{n-1},\infty;\om} \|\vp\|_{W^{1,n,1}(\om)}.
\end{align*}
On the other hand, writing
$$
 \int_\om cv \vp \,dx  = \int_\om  (c-c_k)v\vp \,dx + \int_\om c_k v \vp  \,dx,
$$
and using Lemmas \ref{holder} and \ref{ebd} again, we see that if $n\ge3$, then
\begin{align*}
\left| \langle cv , \vp \rangle \right| & \le C \|c-c_k\|_{\frac{n}{2},1;\om} \|v\|_{\frac{n}{n-2},\infty;\om}\|\vp\|_{\infty;\om} + C \|c_k\|_{\infty;\om} \|v\|_{\frac{n}{n-1},\infty;\om} \|\vp\|_{n,1;\om} \\
	&\le C \|c - c_k \|_{\frac{n}{2},1;\om} \|v\|_{W^{1,\frac{n}{n-1},\infty}(\om)} \|\vp\|_{W^{1,n,1}(\om)}  + C \|c_k\|_{\infty;\om} \|v\|_{\frac{n}{n-1},\infty;\om} \|\vp\|_{n,1;\om}\\
	&\le C\|c - c_k \|_{\frac{n}{2},1;\om} \|Dv\|_{\frac{n}{n-1},\infty;\om} \|\vp\|_{W^{1,n,1}(\om)} \\
		& \qquad \qquad \qquad \quad + C\left(\|c-c_k\|_{\frac{n}{2},1;\om} + \|c_k\|_{\infty;\om}\right) \|v\|_{\frac{n}{n-1},\infty;\om} \|\vp\|_{W^{1,n,1}(\om)}.
\end{align*}
Similarly, if $n=2$, then
\begin{align*}
\left| \langle cv , \vp \rangle \right| & \le C \|c-c_k\|_{s;\om} \|v\|_{\frac{s}{s-1};\om}\|\vp\|_{\infty;\om} + C \|c_k\|_{\infty;\om} \|v\|_{2,\infty;\om} \|\vp\|_{2,1;\om} \\
	&\le C\|c-c_k\|_{s;\om}\|v\|_{W^{1,2,\infty}(\om)}\|\vp\|_{W^{1,2,1}(\om)} + C \|c_k\|_{\infty;\om} \|v\|_{2,\infty;\om} \|\vp\|_{W^{1,2,1}(\om)}\\
	&\le C \|c-c_k\|_{s;\om} \|Dv\|_{2,\infty;\om} + C \left( \|c-c_k\|_{s;\om} + \|c_k\|_{\infty;\om}\right)\|v\|_{2,\infty;\om} \|\vp\|_{W^{1,2,1}(\om)}.
\end{align*}
Hence the desired estimate is proved by choosing a sufficiently large $k$.
 \end{proof}

\section{Proof of \thref{thm1}}
To prove \thref{thm1} (i), we need the following a priori estimate for the distribution functions of  weak solutions in $W^{1,2}_0(\om) \cap L^\infty(\om)$ of $\eqref{Eq3}$.

\begin{lem}\thlabel{apriori est-2-1}
Let $\om$ be a bounded Lipschitz domain in $\mathbb{R}^n , n \ge 2$.
Suppose that $b\in L^{2}(\om;\rn)$, $c\in L^{1}(\om)$, and $c\ge0$ in $\Omega$.
Then there is  a constant $C=C(n,\Omega , \delta )$ such that if $u \in  W^{1,2}_0(\om) \cap L^\infty(\om)$
satisfies
\begin{equation}\label{thm1-I-1}
\int_\om \left( A D u \cdot D \vp -u b \cdot  D \vp  + cu \vp\right) dx =   \langle f ,  \vp \rangle
\end{equation}
for all $\vp \in C^\infty_c(\om)$, where $f \in W^{-1,2}(\om)$,   then
\[
d_u (\alpha ) \le \frac{C \left( \|b\|_{2;\Omega}+ \|f\|_{W^{-1,2}(\om)} \right)^2 }{[\ln (1+\alpha )]^2}
\quad\mbox{for all}\,\, \alpha >0.
\]
\end{lem}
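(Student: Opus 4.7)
The plan is to apply a Boccardo--Gallou\"et logarithmic truncation, testing the identity (\ref{thm1-I-1}) with $\varphi = u/(1+|u|)$. First I would extend the admissible class of test functions from $C_c^\infty(\om)$ to $W^{1,2}_0(\om)\cap L^\infty(\om)$; this is legitimate because $ADu$, $ub\in L^2(\om;\rn)$ (using $u\in L^\infty$ and $b\in L^2$), $cu\in L^1(\om)$ (using $u\in L^\infty$ and $c\in L^1$), and $f\in W^{-1,2}(\om)$, with the required simultaneous $W^{1,2}$-strong and $L^\infty$ weak-$*$ approximation supplied by Lemma \ref{dense}(ii). The proposed $\varphi = u/(1+|u|)$ itself belongs to $W^{1,2}_0(\om)\cap L^\infty(\om)$ with $\|\varphi\|_\infty \le 1$ and $D\varphi = Du/(1+|u|)^2$, by the chain rule for Lipschitz compositions.

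After inserting $\varphi$ into (\ref{thm1-I-1}), the zero-order term equals $\int_\om cu^2/(1+|u|)\,dx \ge 0$ (since $c\ge0$) and is discarded; ellipticity bounds the principal part from below by $\delta\int_\om |Du|^2/(1+|u|)^2\,dx$; the drift term is bounded in absolute value, via $|u|/(1+|u|)\le 1$ and Cauchy--Schwarz, by $\|b\|_{2;\om}\bigl(\int_\om |Du|^2/(1+|u|)^2\,dx\bigr)^{1/2}$; and the right-hand side is bounded by $\|f\|_{W^{-1,2}(\om)}\|\varphi\|_{W^{1,2}(\om)}\le C(\om)\|f\|_{W^{-1,2}(\om)}\bigl(\int_\om |Du|^2/(1+|u|)^2\,dx\bigr)^{1/2}$ after applying Poincar\'e to $\varphi\in W^{1,2}_0$ and noting $(1+|u|)^{-4}\le(1+|u|)^{-2}$. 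Writing $I := \int_\om |Du|^2/(1+|u|)^2\,dx$, a single Young-inequality absorption then yields
\[
I \le C(n,\om,\delta)\bigl(\|b\|_{2;\om}+\|f\|_{W^{-1,2}(\om)}\bigr)^2.
\]

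To close, I would use the identity $Du/(1+|u|) = D\psi(u)$ for $\psi(t):=\textup{sign}(t)\ln(1+|t|)$: since $\psi$ is globally Lipschitz with $\psi(0)=0$, Stampacchia's chain rule gives $\psi(u)\in W^{1,2}_0(\om)$, and Poincar\'e upgrades the gradient bound to
\[
\int_\om[\ln(1+|u|)]^2\,dx = \|\psi(u)\|_{2;\om}^2 \le C(n,\om,\delta)\bigl(\|b\|_{2;\om}+\|f\|_{W^{-1,2}(\om)}\bigr)^2.
\]
Chebyshev's inequality, through $[\ln(1+\alpha)]^2 d_u(\alpha)\le \int_{\{|u|>\alpha\}}[\ln(1+|u|)]^2\,dx$, then delivers the claimed bound. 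The only real design choice -- not an obstacle once seen -- is the test function: the weight $(1+|u|)^{-1}$ is chosen precisely so that the coercive quantity one controls is $\|D\ln(1+|u|)\|_{2;\om}^2$, which sets the natural scale producing a logarithmic, rather than polynomial, weak-type bound on $u$.
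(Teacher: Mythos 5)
Your proposal is correct and matches the paper's proof essentially step by step: the same logarithmic-truncation test function $\vp = u/(1+|u|)$ (justified by Lemma \ref{dense}(ii)), the same use of ellipticity, the sign of $c$, Cauchy--Schwarz on the drift, Poincar\'e on $\vp$, absorption to control $\int_\om |Du|^2/(1+|u|)^2\,dx$, and the identical Chebyshev conclusion via $\ln(1+|u|)\in W^{1,2}_0(\om)$. The only cosmetic differences are that the paper simply divides through by $\|Du/(1+|u|)\|_{2;\om}$ instead of invoking Young's inequality, and works with $\ln(1+|u|)$ rather than $\textup{sign}(u)\ln(1+|u|)$, both of which yield the same bound.
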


\begin{proof}
Suppose that $u \in W_0^{1,2}(\om) \cap L^\infty(\om)$ satisfies \eqref{thm1-I-1} holds for all $\vp \in C^\infty_c(\om)$. By Lemma \ref{dense}, it can be easily shown that \eqref{thm1-I-1} holds for all $\vp \in W^{1,2}_0(\om) \cap L^\infty(\om)$.
Hence taking $\vp = \frac{u}{1+|u|}$ in \eqref{thm1-I-1} and using the Poincar\'{e} inequality, we have
\begin{align*}
\delta \int_\Omega \frac{|Du|^2}{(1+|u|)^2} \, dx &\le \int_\Omega A \left(\frac{Du}{1+|u|}\right) \left(\frac{Du}{1+|u|}\right) \, dx  \\
& = \int_\Omega   \frac{u b \cdot Du}{(1+|u|)^2}  \, dx - \int_\Omega   \frac{c u^2}{1+|u|}  \, dx + \left\langle  f ,  \frac{ u}{1+|u|}  \right\rangle\\
& \le  \int_\Omega   |b| \frac{ | Du| }{1+|u|}  \, dx + C \|f\|_{W^{-1,2}(\om)} \left\| \frac{Du}{1+|u|} \right\|_{2;\Omega}
\end{align*}
and so
\[
\delta \left\| \frac{Du}{1+|u|} \right\|_{2;\Omega} \le \|b\|_{2;\Omega}+ C \|f\|_{W^{-1,2}(\om)}.
\]
Since
$$
\ln (1+|u|) \in W_0^{1,2}(\Omega ) \quad\mbox{and}\quad \left|D [\ln (1+|u|) ]\right|= \frac{|Du|}{1+|u|},
$$
it follows from the Poincar\'{e} inequality that
\[
  \left\| \ln (1+|u|) \right\|_{2;\Omega} \le C \left( \|b\|_{2;\Omega}+ \|f\|_{W^{-1,2}(\om)} \right)
\]
for some $C=C(n, \Omega , \delta )$.
Therefore, for $\alpha >0$,
\begin{align*}
d_u (\alpha )  & = \left| \{x \in \Omega \, :\, \ln (1+|u(x)|) > \ln (1+\alpha ) \} \right|\\
& \le \int_{\Omega} \left[\frac{\ln (1+|u|)}{\ln (1+\alpha )}\right]^2 \, dx \\
& \le \frac{C^2 \left( \|b\|_{2;\Omega}+ \|f\|_{W^{-1,2}(\om)} \right)^2 }{[\ln (1+\alpha )]^2} .
\end{align*}
This completes the proof of the lemma.
\end{proof}

\begin{proof}[Proof of \thref{thm1} \textup{(i)}]

For each  $u\in W^{1,n,1}_0(\om)$, we define
$$
L_0 u =-{\rm div} \left( A D  u \right) \quad\mbox{and}\quad L_1 u  =L_0 u + \div(ub) + cu .
$$
Then it follows from Corollary \ref{kimtsai2} and Lemma  \ref{estimate2} that $L_0$ and $L_1$ are   bounded linear operators from $W^{1,n,1}_0(\om)$ to $W^{-1,n,1}(\om)$ and $L_0 : W^{1,n,1}_0(\om) \to W^{-1,n,1}(\om)$ is an isomorphism.
Hence to prove  \thref{thm1} \textup{(i)}, it suffices, by the method of continuity, to prove the a priori estimate:
\begin{equation}\label{aprori-cont}
\|u\|_{W^{1,n,1}(\om)} \le C \| (1-t)L_0 u +t L_1 u \|_{W^{-1,n,1}(\om)}
\end{equation}
for all $u \in W_0^{1,n,1}(\om)$ and $t \in [0,1]$, where $C$ is a constant depending only on $n$, $\Omega$, $A$, $b$, and $c$.

Suppose that  $u \in W_0^{1,n,1}(\om)$, $t \in [0,1]$, and   $f = (1-t)L_0 u +t L_1 u$. By linearity, we may assume that  $ \| f\|_{W^{-1,n,1}(\om)} \le 1$.
Then by \thref{kimtsai2} and \thref{estimate2},
\begin{align*}
\|u\|_{W^{1,n,1}(\om)} & \le C \| L_0 u \|_{W^{-1,n,1}(\om)} \\
& =  C \| f  - t \, \div(ub) -t  cu \|_{W^{-1,n,1}(\om)} \\
	&  \le C \| f\|_{W^{-1,n,1}(\om)} + C \| ub \|_{n,1;\om} + C \|cu\|_{W^{-1,n,1}(\om)} \\
	&\le C    + \frac{1}{4}\|Du\|_{n,1;\om} + C\|u\|_{n,1;\om}.
	\end{align*}
Moreover, by Lemmas \ref{itp-lorentz} and \ref{ebd},
\[
C\|u\|_{n,1;\om}
 \le C \|u\|_{1;\Omega}^{\frac{1}{n}}\|u\|_{W^{1,n,1}(\om)}^{1-\frac{1}{n}}
  \le \frac{1}{4}\|u\|_{W^{1,n,1}(\om)} + C \|u\|_{1;\Omega}.
\]
For $\alpha >0$, let $E_\alpha =\{x\in \Omega \,|\, |u(x)| > \alpha \}$. Then since
\[
 \|f\|_{W^{-1,2}(\om)}  \le C  \|f\|_{W^{-1,n}(\om)}  \le C ,
\]
it follows from Lemma \ref{apriori est-2-1} that
\begin{align*}
C \|u\|_{1;\Omega} & =  C \int_{\Omega \setminus E_\alpha} |u|\, dx  + C \int_{E_\alpha } |u| \, dx \\
&\le   C \alpha +  C d_u (\alpha )^{1/2} \|u\|_{2;\Omega }  \\
& \le C \alpha + \frac{C \left(   \|b\|_{2;\Omega}+ \|f\|_{W^{-1,2}(\om)} \right)}{\ln (1+\alpha )} \|u\|_{W^{1,n,1}(\om)}\\
&\le  C \alpha + \frac{1}{4} \|u\|_{W^{1,n,1}(\om)}
\end{align*}
for some large $\alpha$ depending only on $n,\Omega, A, b$, and $c$. Combining the above three  estimates, we obtain
\[
\|u\|_{W^{1,n,1}(\om)} \le C +C \alpha + \frac{3}{4} \|u\|_{W^{1,n,1}(\om)},
\]
which  proves   the a priori estimate (\ref{aprori-cont}).
The proof of \thref{thm1} \textup{(i)} is complete.
\end{proof}

\begin{proof}[Proof of \thref{thm1} \textup{(ii)}]
By the method of continuity, it suffices to show that there is a constant $C= C(n, \Omega , A , b,c)$ such that
\begin{equation}\label{aprori-cont-dual}
\|v\|_{W^{1,n',\infty}(\om)} \le C \| -\div (A Dv ) - t b \cdot D  v + t cv \|_{W^{-1,n',\infty}_0(\om)}
\end{equation}
for all $v \in W^{1,n',\infty}_0(\om)$ and $t \in [0,1]$, where $n' = \frac{n}{n-1}$.

Let $v \in W^{1,n',\infty}_0(\om)$ and $t \in [0,1]$ be given. Define
\[
g= -\div (A Dv ) - t b \cdot D  v + t cv .
\]
Then  for all $\psi \in C_c^\infty (\om)$, we have
\begin{equation}\label{thm1-I-5}
\int_\om \left( A D v \cdot D \psi - t \psi b \cdot D v + t cv \psi \right) dx =  \langle g , \psi \rangle .
\end{equation}
It follows from Lemma \ref{estimate2} that
$b\cdot D v$, $cv$, and $g$ all belong to $W^{-1,n', \infty}(\Omega )$. Since
$W^{-1,n', \infty}(\Omega )$ is the dual space of $W_0^{1,n,1}(\Omega )$, it easily follows from  Lemma \ref{dense} that (\ref{thm1-I-5}) holds for all $\psi \in W^{1,n,1}_0(\om)$.

Fix any $F  \in L^{n,1}(\om;\rn)$. Then by the proof of \thref{thm1} (i),
  there exists a unique $u\in W^{1,n,1}_0(\om)$ such that
$$
\|u\|_{W^{1,n,1}(\om)} \le C \|F\|_{n,1;\om}
$$
and
\begin{equation}\label{thm1-I-6}
\int_\om \left(A Du \cdot D \vp  - t u b \cdot D \vp + t cu\vp\right) dx = - \int_\om F \cdot D\vp \,dx
\end{equation}
for all $\vp\in C_c^\infty (\om)$.

We claim that \eqref{thm1-I-6} also holds for all  $\vp \in W^{1,n',\infty}_0(\om)$.
To show this, we  choose functions $f_k \in C_c^\infty(\om)$ and $H_k \in  C_c^\infty(\om;\rn)$   such that $f_k \rightarrow -t cu$ in $L^{\frac{n}{2},1}(\om)\cap L^s(\om)$ and $H_k \rightarrow F - tub$ in $L^{n,1}(\om;\rn)$.
Choose any $n<r<\infty$.
Then by  \thref{kimtsai2},   there is a unique $u_k\in W^{1,r}_0(\om)$ such that
\begin{equation}\label{thm1-I-9}
\int_\om A D  u_k \cdot D \vp \,dx = \int_\om f_k \vp  \,dx - \int_\om H_k \cdot D\vp \,dx
\end{equation}
for all $\vp \in W^{1,r'}_0(\om)$. Since $n' >r'$, we have  $ W^{1,n',\infty}_0(\om) \hookrightarrow  W^{1,r'}_0(\om)$. Hence \eqref{thm1-I-9} also holds for all $\vp\in W^{1,n',\infty}_0(\om)$. On one hand, it follows from \thref{ebd} that if $\vp \in W^{1,n',\infty}_0(\om)$, then    $\vp \in L^{\frac{n}{n-2},\infty }(\om)$ for $n \ge 3$ and $\vp \in  L^{s'}(\Omega )$ for $n=2$.
On the other hand, by \thref{kimtsai2}, we have
\begin{align*}
\|u_k - u\|_{W^{1,n,1}(\om)} &\le C \| \div(F - t ub  -   H_k ) - t cu - f_k \|_{W^{-1,n,1}(\om)} \\
& \le
\left\{
\begin{aligned}
&C \|F- tub - H_k \|_{n,1;\om} + C\|tc u + f_k\|_{\frac{n}{2},1;\om} &\quad\text{if $n\ge3$},\\
&C \|F- t ub - H_k \|_{n,1;\om} + C\|t cu + f_k\|_{s;\om}&\quad\text{if $n=2$}.
\end{aligned}
\right.
\end{align*}
Therefore, letting $k\rightarrow \infty$ in \eqref{thm1-I-9}, we see that \eqref{thm1-I-6} holds for all $\vp \in W^{1,n',\infty}_0(\om)$.

Now, putting $\psi =u$ and $\vp = v$ in \eqref{thm1-I-5} and \eqref{thm1-I-6}, respectively, we obtain
\begin{align*}
\left|\int_\om F \cdot Dv \, dx \right|= \left|\langle g ,  u  \rangle \right| &\le C \|g\|_{W^{-1,n',\infty}_0(\om)} \| u\|_{W^{1,n ,1}_0(\om)} \\
& \le C \|g\|_{W^{-1,n',\infty}_0(\om)} \|F\|_{n,1;\om}.
\end{align*}
Since $F \in L^{n,1}(\om;\rn)$ is arbitrary, it follows from   \thref{dual-lorentz} (the duality theorem) that
\[
\|Dv\|_{n',\infty;\om} \le C \|g\|_{W^{-1,n',\infty}_0(\om)}.
\]
Moreover, by the Sobolev inequality, we have
\[
\|v\|_{n',\infty;\om} \le C \|v\|_{(r')^*;\om} \le C \|Dv\|_{r';\om} \le C \|Dv\|_{n',\infty;\om},
\]
which proves the a priori estimate (\ref{aprori-cont-dual}). The proof of \thref{thm1} (ii) is complete.
\end{proof}

\section{Proof of \thref{thm1-2}}

Let $L_0 '$ and $L'$ be  differential operators defined by
$$
L_0 'u=-\sum_{i,j=1}^n a^{ij}D_{ij} u \quad\mbox{and}\quad L ' u = L_0 ' u + b \cdot  D  u+ cu.
$$
Recall the famous  Aleksandrov  maximum principle (see, e.g., \cite[Theorem 9.1]{gilbarg}) for elliptic equations in non-divergence form.

\begin{thm}\thlabel{aleksandrov}
Let $\om$ be a bounded domain in $\rn$.
Suppose that $b\in L^n(\om; \rn)$, $c\ge 0$ in $\om$, and $f\in L^n(\om)$.
If $u\in W^{2,n}_{loc}(\om)\cap C(\overline \om)$ satisfies $L'u \le f$ in $\om$, then
$$
\sup_\om u \le \sup_{\partial \om} u^+ + C \|f\|_{n;\om},
$$
where $C=C(n, \textup{diam} \,\om, \delta, \|b\|_{n;\om})$.
\end{thm}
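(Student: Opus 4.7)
The plan is to carry out the classical Aleksandrov--Bakelman--Pucci (ABP) argument, with a weighted area-formula inequality in order to absorb the first-order drift. After subtracting $\sup_{\bom}u^+$ (which preserves the inequality $L'u\le f$ since $c\ge 0$ and $\sup_{\bom}u^+\ge 0$), I may assume $u\le 0$ on $\bom$; write $M=\sup_\om u$, $d=\textup{diam}\,\om$, $r=M/d$, and suppose $M>0$. The first step is to pass to the upper contact set
\[
\Gamma^+=\{x\in\om:u(y)\le u(x)+p\cdot(y-x)\ \text{for all } y\in\om,\ \text{some } p=p(x)\},
\]
where $u$ coincides with its concave envelope from above and $-D^2 u\ge 0$ a.e.; then restrict to $\Gamma^+\cap\{u>0\}$. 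A classical geometric observation shows $Du(\Gamma^+)\supset B_r(0)$, converting the pointwise maximum into an integral inequality via the change of variables $p=Du(x)$.

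The second step extracts a pointwise determinant bound. On $\Gamma^+\cap\{u>0\}$ both $A$ and $-D^2 u$ are symmetric positive semidefinite, and AM--GM on the eigenvalues of the product gives
\[
(\det A)^{1/n}\bigl(\det(-D^2 u)\bigr)^{1/n}\le-\frac{1}{n}\sum_{i,j=1}^n a^{ij}D_{ij}u.
\]
Combining $\det A\ge\delta^n$ from (A2) with $L'u\le f$ and $cu\ge 0$ yields
\[
|\det D^2 u|\le(n\delta)^{-n}\bigl(f^++|b|\,|Du|\bigr)^n\quad\text{on }\Gamma^+\cap\{u>0\}.
\]

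The heart of the proof is the weighted version of the area-formula inequality: for every nonnegative Borel $g$,
\[
\int_{B_r}g(p)\,dp\le\int_{\Gamma^+}g(Du)\,|\det D^2 u|\,dx.
\]
I would pick the radial weight $g(p)=(\nu^n+|p|^n)^{-1}$ with $\nu>0$ to be optimized. The left side evaluates to $\omega_n\log(1+(r/\nu)^n)$; for the right side, insert the determinant bound, use $(a+b)^n\le 2^{n-1}(a^n+b^n)$, and apply the trivial pointwise estimates $(f^+)^n/(\nu^n+|Du|^n)\le(f^+)^n\nu^{-n}$ and $|b|^n|Du|^n/(\nu^n+|Du|^n)\le|b|^n$ to obtain
\[
\omega_n\log\!\bigl(1+(r/\nu)^n\bigr)\le\frac{2^{n-1}}{(n\delta)^n}\Bigl(\nu^{-n}\|f^+\|_{n;\om}^n+\|b\|_{n;\om}^n\Bigr).
\]
Setting $\nu=\|f^+\|_{n;\om}/\|b\|_{n;\om}$ (when $\|b\|_{n;\om}>0$) balances the two terms and produces an upper bound on $r\|b\|_{n;\om}/\|f^+\|_{n;\om}$ depending only on $n$, $\delta$, and $\|b\|_{n;\om}$. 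Rearranging and recalling $r=M/d$ gives the claimed bound; the case $\|b\|_{n;\om}=0$ is recovered by letting $\nu\to\infty$.

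The main obstacle is precisely the first-order term $b\cdot Du$: there is no pointwise bound for $|Du|$ on $\Gamma^+$, so the naive choice $g\equiv 1$ would leave the uncontrollable quantity $\int|b|^n|Du|^n\,dx$. The weight $g(p)=(\nu^n+|p|^n)^{-1}$ is a scale-invariant device that uses exactly the $L^n$-integrability of $b$, and the logarithm on the left is what absorbs the $\|b\|_{n;\om}$-dependence into the final constant. A minor technical point, which I would handle by a standard Lipschitz approximation of the concave envelope, is the rigorous justification of the area formula for $u\in W^{2,n}_{\mathrm{loc}}\cap C(\overline{\om})$ on the upper contact set; the full bookkeeping is carried out in \cite{gilbarg}.
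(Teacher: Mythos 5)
Your argument is correct: it is precisely the classical Aleksandrov--Bakelman--Pucci proof (upper contact set, matrix AM--GM determinant bound, and the weighted normal-mapping inequality with $g(p)=(\nu^n+|p|^n)^{-1}$ to absorb the drift via its $L^n$-norm), which is exactly the proof of \cite[Theorem 9.1]{gilbarg} that the paper recalls by citation rather than proving. No discrepancy with the paper's route.
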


\begin{proof}[Proof of \thref{thm1-2} \textup{(i)}]
Given $u\in W^{2,n,1}(\om)\cap W^{1,n,1}_0(\om)$  and $t \in [0,1]$, we write
\[
f= L_0 ' u +t  b \cdot  D  u +t cu .
\]
Then by Lemmas \ref{kimtsai} and  \ref{estimate1}, we have
\begin{align*}
\| u\|_{W^{2,n,1}(\om)} &\le C \|L_0 'u \|_{n,1;\om}   \\
&\le C \|f \|_{n,1;\om} + C \|t  b \cdot  D  u +t  cu\|_{n,1;\om}  \\
	&\le C \|f\|_{n,1;\om} + \frac{1}{2}\|D^2u\|_{n,1;\om} + C \|u\|_{n,1;\om}  .
\end{align*}
Moreover, since $W^{1,n,1} (\om) \hookrightarrow C(\overline \om)$, it follows from Theorem  \ref{aleksandrov}  that
\[
\|u\|_{n,1;\om} \le C \| u \|_{\infty;\om}  \le    C \|f\|_{n;\om} .
\]
Combining the two estimates, we obtain the following  a priori estimate:
\begin{equation}\label{aprori-cont-strong}
\|u\|_{W^{2,n,1}(\om)} \le C \|  L_0 ' u +t  b \cdot  D  u + t cu  \|_{L^{n,1}(\om)}
\end{equation}
for all $u\in  W^{2,n,1}(\om)\cap W^{1,n,1}_0(\om)$ and $t \in [0,1]$, where $C = C(n, \Omega , A, b, \|c\|_{n,1;\om})$. The proof of \thref{thm1-2} \textup{(i)}
is complete by the method of continuity.
\end{proof}

\begin{proof}[Proof of \thref{thm1-2} \textup{(ii)}]
By \thref{thm1-2} \textup{(i)}, $L'$ is an isomorphism from $W^{2,n,1}(\om)\cap W^{1,n,1}_0(\om)$ onto $L^{n,1}(\om)$. Let $\mathcal{S}: L^{n,1}(\om)\rightarrow W^{2,n,1}(\om)\cap W^{1,n,1}_0(\om)$ be the inverse   of $L'$.
Then for all $f \in L^{n,1}(\om)$,
\[
  \|D\mathcal{S}(f)\|_{\infty;\om} \le C  \|D\mathcal{S}(f)\|_{W^{1,n,1}(\om)}  \le C   \|f\|_{n,1;\om}.
\]
Hence given  $G\in L^1(\om;\rn)$, the mapping
$$
f\in L^{n,1}(\om) \mapsto T(f)= -\int_\om G \cdot D \mathcal{S}(f) \,dx
$$
defines a bounded linear functional $T$  on $L^{n,1}(\om)$, which satisfies
$$
\|T\|_{L^{n,1}(\om)^*} \le C \|G\|_{1;\om}.
$$
Hence by \thref{dual-lorentz}, there exists  a unique $v \in L^{n',\infty}(\om)$ such that
\[
 \|v\|_{n',\infty;\om}\le C \|T\|_{L^{n,1}(\om)^*}\le C \|G\|_{1;\om}
\]
and
\begin{equation}\label{VWW-via S0}
\int_\om v f \,dx    = -\int_\om G \cdot D \mathcal{S}(f) \,dx \quad \text{for all $f\in L^{n,1}(\om)$.}
\end{equation}
Since $\mathcal{S} $ is the inverse   of the isomorphism $L'$, it immediately  follows from (\ref{VWW-via S0}) that
\begin{equation}\label{VWW-via S}
\int_\om v L' \psi \,dx    = -\int_\om G \cdot D \psi \,dx
\end{equation}
for all $ \psi \in W^{2,n,1}(\om)\cap W^{1,n,1}_0(\om)$. Therefore, $v   $ is a very weak solution in $L^{n',\infty}(\om)$ of (\ref{Eq4}) satisfying the desired estimate.

To complete the proof of of \thref{thm1-2} \textup{(ii)}, it remains to prove the uniqueness assertion. Let $v \in L^{n',\infty}(\om)$ is a very weak solution of (\ref{Eq4}) with the trivial data $G=0$. Then for all $\psi \in C^{1,1}(\overline{\om})$ with $\psi =0$ on $\partial\Omega$, we have
\[
\int_\om v L' \psi \,dx    = 0.
\]
Let $f\in L^{n,1}(\Omega )$ and $u =\mathcal{S}(f)$. Then by Lemma \ref{dense}, there is a sequence $\{\psi_k\}$ in $C^{1,1}(\overline{\om})$ such that $ \psi_k  =0$ on $\partial\Omega$ and  $\psi_k \rightarrow u$ in $W^{2,n,1}(\om)$.
Since $L'\psi_k \to L' u =f$ in $L^{n,1}(\om )$, we have
\[
\int_\om v f \,dx    = \lim_{k\to\infty} \int_\om v L' \psi_k  \,dx =0.
\]
Since $f\in L^{n,1}(\Omega )$ is arbitrary, we deduce that $v =0$ identically on $\Omega$. The proof of of \thref{thm1-2} \textup{(ii)} is complete.
\end{proof}

\bibliographystyle{amsplain}

\end{document}